\author[C. Khor]{Calvin Khor}
\address{School of Mathematical Sciences, Beijing Normal University, \\Beijing 100875, P. R.  China}
\email{C.Khor@bnu.edu.cn}
\author[J.L. Rodrigo]{Jos\'e L. Rodrigo}
\address[J.L. Rodrigo]
{Mathematics Research Centre,
Zeeman Building, \\
University of Warwick,
Coventry CV4 7AL,\\
United Kingdom}
\email{J.Rodrigo@warwick.ac.uk}
\newcommand{\del}{\partial}
\renewcommand{\grad}{\nabla}
\newcommand{\br}[1]{\left(#1\right)}
\newcommand{\init}{\text{in}}
\newcommand{\out}{\text{out}}
\newcommand{\md}{\text{mid}}
\newcommand{\mynear}{\text{near}}
\newcommand{\myfar}{\text{far}}
\newcommand{\Torus}{\mathbb T}
\newcommand{\auxG}{\mathcal G}
\newcommand{\tgt}{T}
\newcommand{\nrml}{N} 
\DeclareMathOperator{\supp}{supp}
\newcommand{\deltgperp}{\mycurve_\tau \cdot \nrml}
\newcommand{\deltgpar}{\mycurve_\tau \cdot \tgt}
\newcommand{\mycurve}{z}
\newcommand{\myvel}{u}
\newcommand{\myspine}{\vec{S}}
\newcommand{\hlapl}{\Lambda}
\newcommand{\myspinein}{A_{\init}^{\myspine}}
\newcommand{\myspineout}{A_{\out}^{\myspine}}
\newcommand{\myspinemid}{A_{\md}^{\myspine}}
\newcommand{\indicator}{{\mathbf 1}}
 \DeclareRobustCommand{\SkipTocEntry}[5]{}
\newtheorem{thm}{Theorem}[section]
\newtheorem{cor}[thm]{Corollary}
\newtheorem{lem}[thm]{Lemma}
\newtheorem{prop}[thm]{Proposition}
\theoremstyle{definition}
\newtheorem{defn}[thm]{Definition}
\theoremstyle{remark}
\newtheorem{rem}[thm]{Remark}
\numberwithin{equation}{section}
\newcommand{\ASF}{ASF\xspace}
\newcommand{\set}[1]{\left\{{#1}\right\}}
\newcommand{\Real}{\mathbb R}
\newcommand{\eps}{\varepsilon}
\DeclareMathOperator{\mysin}{Sin}
\newcommand{\thetaU}[0]{ u}
\begin{document}

\title[On SFs and ASFs for singular SQG]{On Sharp Fronts and Almost-Sharp Fronts for singular SQG }
\date{\DTMnow}  
\maketitle 



\section*{Abstract}

In this paper we  consider a family of active scalars with a velocity field given by $\thetaU = \Lambda^{-1+\alpha}\grad^{\perp} \theta$, for $\alpha \in (0,1)$. This family of equations is a more singular version of the two-dimensional Surface Quasi-Geostrophic (SQG) equation, which would correspond to $\alpha=0$.

 We consider the evolution of sharp fronts by studying families of almost-sharp fronts. These are smooth solutions with simple geometry in which a sharp transition in the solution occurs in a tubular neighbourhood (of size $\delta$). We study their evolution and that of compatible curves, and introduce the notion of a spine for which we obtain improved evolution results, gaining a full power (of $\delta$) compared to other compatible curves.

\section{Introduction}

In this paper, we develop and study the notions of  sharp front, almost-sharp front, and spine curve for the following more singular version of the two-dimensional SQG equation,
\begin{equation}\label{eqn-singular-sqg} 
\left\{
 \begin{split}
        \partial_t \theta + u\cdot\grad \theta = 0, \\
        u = \grad^\perp \Lambda^{-1+\alpha} \theta  .
    \end{split}   \right.\end{equation}
We will focus in the range  $\alpha \in (0,1)$. Here $\theta=\theta(x,t)\in\mathbb R$, $x\in\mathbb R^2,$ and $t\ge 0$ and the operator $\Lambda = |\grad|$ is the half Laplacian on $\mathbb R^2$. To simplify the presentation we will define $\Lambda^{-1+\alpha}$ as given by a convolution with the kernel $K(x) = |x|^{-1-\alpha} \in L^2_{\text{loc}}(\mathbb R^2)$,
\[ \Lambda^{-1+\alpha} f(x) =  K * f(x) = \int_{\mathbb R^2} \frac{f(y)}{|x-y|^{1+\alpha}} \dd{y} .\]
In $\mathbb{R}^2$, usually one has $\Lambda^{-\beta} f = c_\beta |x|^{\beta - 2} * f$ but we ignore  $c_\beta$ here. For $\alpha\in[-1,0]$, the family interpolates between the 2D Euler equation($\alpha=-1$) and the 2D SQG equation ($\alpha=0$). The model with $\alpha\in(0,1)$ is the natural extrapolation of this family and the focus of this paper.

Sharp fronts are weak solutions of \eqref{eqn-singular-sqg} that have the special form
\[ \theta(x,t) = \mathbf 1_{x\in A(t)}, \]  where $A$ is a bounded simply connected domain with sufficiently  regular boundary (say $C^2$), and $\mathbf 1_U$ is the indicator function of the set $U$. 

Periodic graph-type SQG sharp fronts have been considered in \cite{rodrigo2004vortex} and \cite{fefferman2011analytic} by periodising one of the space variables. For SQG,  sharp fronts  have already been studied in the Sobolev setting in \cite{gancedo2008existence} and \cite{gancedo2016arXiv160506663C} (see  Chae et. al. \cite{chae2012generalized} 
 as well). 
 
 Non-periodic graph-type sharp fronts of \eqref{eqn-singular-sqg} were studied in the recent papers, and  of Hunter, Shu, and Zhang. They gave two different approaches in \cite{hunter2018regularized} and \cite{hunter2019contour} to deriving the correct contour dynamics equation in this setting, and proved \cite{hunter2018global} global existence under a certain smallness condition. They also study two-front dynamics \cite{hunter2019twofront} and approximate equations for sharp fronts \cite{hunter2018local}; these differ from the almost-sharp fronts considered in this paper, which are smooth functions solving \eqref{eqn-singular-sqg} exactly, approximating a sharp front. 

 Finally, a second paper \cite{khor2019analytic} by the authors has been prepared alongside this one that proves local existence of sharp fronts of \eqref{eqn-singular-sqg} a bounded domain in the analytic setting. This is an analogous result to the existence theorem for SQG in \cite{fefferman2011analytic}.

Analogously to \cite{fefferman2012almost}  and \cite{fefferman2015construct}, we define an almost-sharp front (ASF) of size $\delta$ as a regular\footnote{We do not attempt to optimise the regularity conditions for the front or almost-sharp front, taking them to be $C^2$ throughout.}  approximation of a sharp front: {
\[ \theta(x,t) = \begin{cases}
    1 & x\in A_{\init}(t), \\
    \text{regular} & x\in A_{\md}(t), \\
     0 & x \in \mathbb R^2 \setminus (A_{\init}(t)\cup A_{\md}(t)),
\end{cases} \]
where  $\supp \nabla\theta \subset A_{\md}$ and this `transition region' has area $O(\delta)$, so that in the limit as $\delta\to 0$, one formally recovers a sharp front.


In the Appendix, we derive asymptotics for integrals appearing in the equation, which we use to write down an approximate equation for an almost-sharp front. In doing so, we discover a significant difference with  \cite{fefferman2012almost}, which corresponds to $\alpha=0$: the appearance of a logarithm in the equation of \cite{fefferman2012almost} and the property $\log(ab) = \log b + \log b$ led to the development of an `approximate unwinding' for their equation in \cite{fefferman2015construct}. However, in our case, the singularity is stronger and we  have $|ab|^{-\alpha} \neq |a|^{-\alpha} + |b|^{-\alpha} $, so we have a nonlinear term that explodes as $\delta\to0$. Regardless, the analogous function to the $h$ function of \cite{fefferman2012almost} does satisfy a well behaved linear homogeneous equation in the limit as $\delta\to 0$.

The remainder of this paper is organised as follows. In Section \ref{section-notation}, we set up some notation. Section \ref{section3} discusses sharp fronts, with almost-sharp fronts introduced in Section \ref{section4}. The regularisation effect for $\Omega$ that leads to a limit equation for $h$ is discussed in Section \ref{section-h-equation}. Finally, Section \ref{section6} discusses the notion of compatible curves for almost-sharp fronts, and Section \ref{section-spine} introduces the notion of a spine, and establishes an improved evolution result.    

\section{Notation}
\label{section-notation}
Let $\mathbb T := \mathbb R / \mathbb Z$, with fundamental domain $[-1/2,1/2)$. The $90^\circ$ counter-clockwise rotation of $x = \binom{a}{b}\in\mathbb R^2 $ is $x^\perp := \binom{-b}{a},$
and similarly $\nabla^\perp=\binom{-\del_{x^2}}{\del_{x^1}}$.
We will frequently employ the shorthand notation
\begin{align}
    \label{shorthand}
        \int_A F(g,f_*) \dd{s_*} &:= \int_A F(g(s),f(s_*))\dd{s_*}, \\
    \int_A\int_B F(g,f_*)  \dd{s_*}\dd{\xi_*} &:= \int_A\int_B F(g(s,\xi), f(s_*,\xi_*)) \dd{s_*}\dd{\xi_*}.
\end{align}
That is, evaluation at $s,\xi$ is assumed unless a function is subscripted by $*$, and then we will assume it is evaluated at $s_*,\xi_*$.
\section{Evolution equation for a sharp front}
\label{section3}
In this section, we give the contour dynamics equation (CDE) for sharp fronts. Recall that $K(x) = |x|^{-1-\alpha}$. The equation we study \eqref{eqn-singular-sqg} can be written equivalently as
\begin{equation}\label{eqn-general-evolution}
    \left\{ \begin{aligned}
        \del_t \theta + \myvel\cdot\grad \theta = 0, \\
        \myvel = (\grad^\perp K)* \theta .\\ 
    \end{aligned}\right.
    \end{equation}


\begin{defn}
    We say that $\theta=\theta(x,t)$ for $x\in\mathbb R^2,t\ge 0$ is a weak solution to \eqref{eqn-general-evolution} if there exists $T>0$ such that $\theta\in L^2(0,T; L^2(\mathbb R^2)) $,  $u=\nabla^\perp  K * \theta\in L^2(0,T; L^2(\mathbb R^2 ; \mathbb R^2)),$ and for any  $\phi\in C^\infty_c((0,T)\times \mathbb R^2)$
    \begin{align} \int_0^T\int_{\mathbb R^2} \theta\del_t\phi  + \myvel \cdot \nabla \phi \dd{x}\dd{t}= 0. \label{eqn-weak-solution-defn} \end{align}
\end{defn}

\begin{defn} 
(Sharp front)  We say that a weak solution $\theta$ to \eqref{eqn-general-evolution} is a sharp-front solution to \eqref{eqn-general-evolution} if
    \begin{enumerate}
        \item  for each $t\in[0,T]$, there exists a bounded simply connected closed set $A(t)$  with sufficiently regular boundary ($C^2$), parameterised (anti-clockwise) by  $z=z(s,t)$, with $s \in \mathbb T, t \in [0,T]$,
        \item $\theta$ is the indicator function of $A(t)$ for each $t\in(0,T)$,  \[ \theta(x,t) = \mathbf 1_{x\in A(t)} := \begin{cases}
 1 &     x \in A(t), \\ 0 & x \notin A(t).
 \end{cases} \]
     \end{enumerate} 
\end{defn}


It will be useful to have coordinates defined on a fixed domain $\mathbb T$. Therefore,  we will not use arc-length, as the length of the curve may not be preserved in time. Instead, we  use a uniform speed parameterisation (also see Section \ref{tub-coords}) 
 $\mycurve:\mathbb T \to \mathbb R^2$ where the speed $\left|\dv{\mycurve}{s}\right| = L$ is constant in $s$, and $L(t)$ is the length of the curve at time $t$. If  $\mycurve$ is positively oriented, the 2D Frenet formulas  are
$\dv{\tgt}{s} = L \kappa \nrml,\dv{\nrml}{s} = -L \kappa \tgt$, with  unit tangent  $\tgt = \frac1L \dv{\mycurve}{s} $ and unit normal $\nrml := \tgt^\perp$. The curvature is $\kappa = \frac1L\dv{\tgt}{s}\cdot \nrml = L^{-3} \mycurve_{ss}\cdot\mycurve_s^\perp$. Integration along $\mycurve$ will be written as 
$$ \int_{\mycurve(\mathbb T) } f(\mycurve) \dd{l} := L \int_{\mathbb T} f(z(s)) \dd{s}.$$
A similar argument to \cite{rodrigo2004vortex} yields a rigorous derivation of the evolution equation for $z$.
\begin{prop}[Evolution of a sharp front]\label{prop-evo-of-SF}If $\theta=\indicator_A$ is a sharp front solution to \eqref{eqn-general-evolution}, then the uniform speed counter-clockwise parameterisation $\mycurve:\mathbb T\to\mathbb R^2$ of $\del A$ with  normal $\nrml = \del_s \mycurve^\perp$ satisfies the following CDE,
\begin{equation}
	\label{eqn-generic-cde}
	 \del_t \mycurve \cdot \nrml = \left(- \int_{\mathbb T} K(\mycurve-\mycurve_*) (\del_s \mycurve_* - \del_s \mycurve )  \dd{s}_*  \right) 
\cdot \nrml =: -\mathcal I(\mycurve) \cdot \nrml.
\end{equation}
\end{prop}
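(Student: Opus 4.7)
My plan is to follow the scheme of \cite{rodrigo2004vortex}: read off a kinematic condition for $\del A(t)$ from the weak formulation, convert the convolution defining $\myvel$ into a line integral via Green's theorem, and then regularize the boundary singularity using tangential cancellation.

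\textbf{Step 1 (Kinematic condition).} Substituting $\theta = \indicator_{A(t)}$ into \eqref{eqn-weak-solution-defn} and using $\nabla\cdot\myvel = 0$ (which follows from $\myvel = \nabla^\perp\Lambda^{-1+\alpha}\theta$), an integration by parts gives $\int_{A(t)}\myvel\cdot\nabla\phi\dd{x} = \int_{\del A(t)}(\myvel\cdot n)\phi\dd{l}$, where $n$ is the outward unit normal. Combining with Reynolds transport applied to $\int_{A(t)}\phi\dd{x}$, and using the compact support of $\phi$ in $(0,T)$, yields
\[ \int_0^T \int_{\del A(t)} \bigl(\myvel - \del_t\mycurve\bigr)\cdot n\,\phi\dd{l}\dd{t} = 0 \]
for all admissible $\phi$. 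Localizing $\phi$ near any boundary point gives the pointwise kinematic identity $\del_t\mycurve\cdot n = \myvel(\mycurve(s,t))\cdot n$, equivalent to $\del_t\mycurve\cdot\nrml = \myvel(\mycurve(s,t))\cdot\nrml$ after scaling by $L$, since $\nrml = \del_s\mycurve^\perp$ is a (inward-pointing, length $L$) normal.

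\textbf{Step 2 (Boundary-integral form of $\myvel$).} Since $K$ is locally integrable, $\myvel = \nabla^\perp K * \theta = K * \nabla^\perp\theta$ distributionally. For $\theta = \indicator_{A(t)}$, one has $\nabla^\perp\theta = -n^\perp\delta_{\del A}$, and hence for $x\notin\del A$,
\[ \myvel(x) = -\int_{\del A} K(x-y)\,n^\perp(y)\dd{l}(y) = -\int_{\mathbb T} K(x-\mycurve_*)\del_s\mycurve_*\dd{s_*}, \]
where we used $n^\perp\dd{l} = \del_s\mycurve\dd{s}$ under the CCW parameterization.

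\textbf{Step 3 (Cancellation against the singularity).} For $x = \mycurve(s,t)$ the kernel $K(\mycurve-\mycurve_*) \sim |s-s_*|^{-1-\alpha}$ fails to be integrable when $\alpha\in(0,1)$. Adding and subtracting $\del_s\mycurve$ at $s$,
\[ \myvel(\mycurve) = -\int_{\mathbb T} K(\mycurve-\mycurve_*)(\del_s\mycurve_* - \del_s\mycurve)\dd{s_*} - \del_s\mycurve\int_{\mathbb T} K(\mycurve-\mycurve_*)\dd{s_*}. \]
The first integral is absolutely convergent: the increment $\del_s\mycurve_* - \del_s\mycurve = O(|s-s_*|)$ combined with the chord-arc bound $|\mycurve-\mycurve_*|\ge cL|s-s_*|$ (both from $C^2$ regularity) gives an integrand of order $|s-s_*|^{-\alpha}$. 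Taking the inner product with $\nrml$ eliminates the (formally divergent) second term since $\del_s\mycurve\cdot\nrml = \del_s\mycurve\cdot\del_s\mycurve^\perp = 0$, delivering \eqref{eqn-generic-cde}.

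The main obstacle is rigorously justifying Step 3, since it splits a well-defined quantity into divergent pieces. The clean fix is to perform the manipulations of Steps 2--3 at the off-curve point $x = \mycurve(s,t) + \eps\nrml$ for small $\eps>0$, where every integral in the splitting is absolutely convergent, and then let $\eps\to 0$: the first integral passes to its limit by dominated convergence, while the second, though unbounded as $\eps\to 0$, is annihilated \emph{uniformly in $\eps$} by the identity $\del_s\mycurve\cdot\nrml = 0$. This approach also encodes the relevant continuity fact that only the normal trace of $\myvel$ is continuous across $\del A$ (the tangential trace in general diverges), which is exactly the cancellation being exploited.
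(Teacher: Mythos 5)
The paper does not actually present a proof of Proposition~\ref{prop-evo-of-SF}; it only remarks that ``a similar argument to \cite{rodrigo2004vortex}'' yields the CDE. Your proposal correctly executes exactly that standard contour-dynamics scheme: kinematic boundary condition from the weak formulation, reduction of $\myvel$ to a line integral via $\nabla^\perp\theta = -n^\perp\,\delta_{\partial A}$, and tangential cancellation against the non-integrable kernel singularity. You also correctly identify and resolve the one genuine subtlety — the splitting in Step~3 is formally into divergent pieces, and $\myvel$ itself is not continuous up to $\partial A$ when $\alpha\in(0,1)$, so the identity only holds for the normal trace; the $\eps\to0$ off-curve regularization with dominated convergence is the right way to make this rigorous, and it matches the fact that only $\myvel\cdot\nrml$ has a boundary limit. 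Two small points you may want to tighten if writing this out in full: the weak formulation as printed in the paper is missing a factor of $\theta$ in the transport term (as written, the $\myvel\cdot\nabla\phi$ term integrates to zero on its own since $\nabla\cdot\myvel=0$ and $\phi$ is compactly supported), so your Step~1 is implicitly correcting a typo; and the chord-arc lower bound $|\mycurve-\mycurve_*|\ge c|s-s_*|$ uses simple-closedness together with $C^1$ regularity via a compactness argument, not $C^2$ regularity per se — $C^2$ is what you need for the Lipschitz estimate on $\partial_s\mycurve$.
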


\section{Almost-Sharp Fronts}
\label{section4}
In this chapter, we define an almost-sharp front as a regularisation (in a small strip) of a sharp front. Then we  derive an asymptotic equation for an almost-sharp front, using an asymptotic lemma for a parameterised family of integrals. We also show that integrating across the transition region has a regularising effect, simplifying the asymptotic equation of an almost-sharp front.

\begin{defn}[Almost-sharp front]\label{defn-ASF}
An almost-sharp front (\ASF)  $\theta=\theta_\delta(x,t)$ to SQG is a family of solutions to \eqref{eqn-general-evolution}, such that for each $\delta > 0$ sufficiently small, there exists a closed $C^2$ curve  $\mycurve$ bounding a simply connected region $A$, and a constant $C^{\mycurve}$ with $0 <C^{\mycurve} < C$ for some fixed $C$ independent of $\delta$, such that with the following sets,
\begin{alignat*}{2}
 A_{\md}(t)& 
    &&= \set{x \in \mathbb R^2 : \operatorname{dist}(x,\partial A(t)) \le  C^{\mycurve}  \delta },\\
 A_{\init}(t) &
    &&= \set{x \in A : \operatorname{dist}(x,A(t)) > C^{\mycurve}  \delta},\\
 A_{\out}(t)  &
    &&= \Real^2 \setminus (A_{\init} \cup A_{\md}),    
\end{alignat*}
$\theta$ is a $C^2$ function in $\mathbb R^2$ (as a function of $x$) such that
	\[\theta(x,t) = \begin{cases}
	1 & x \in A_{\init}(t), \\
	C^2 \text{ smooth} & x \in A_{\md}(t),\\
	0  & x \in A_{\out}(t).
\end{cases}
\]
We also demand the following growth condition for $\theta_\delta$,
\begin{align}\label{eqn-growth-condition}
    \|\nabla \theta \|_{L^\infty} \lesssim \frac{1}{\delta}.
\end{align}
\end{defn}
In particular $\theta$ is locally constant  in $A_\out(t)\cup A_\init(t)$,  and
	\[ \supp \theta \subset A_\md(t)\cup A_{\init}(t),\quad  \supp \grad \theta(\cdot,t)\subset A_{\md} (t).\]
Note that the curve $\mycurve$ in the above definition is not unique.
\begin{defn}[Compatible curve]\label{defn-comp-curve}
Any curve $\mycurve$ satisfying the above definition for an ASF $\theta$ is called a compatible curve for $\theta$.
\end{defn}

\subsection{Tubular neighbourhood coordinates}
\label{tub-coords}
Given a curve $\mycurve(s,t)$ with uniform speed (denoted by $L(t)$),  we  define the tubular neighbourhood coordinates around $\mycurve$ (for $\delta\ll 1$) by (we suppress the explicit dependence on $t$)
\begin{align}\label{eqn-tub-coords}  x(s,\xi) =  \mycurve(s) + \delta \xi \nrml(s), \ s\in\mathbb T, \xi \in [-1,1]. \end{align}
We will denote by $L_1 := L (1-\delta \kappa\xi)$, where as before $\kappa$ is the curvature of $z$, and $L(t)$ is the length of the curve (which agrees with the uniform speed of the parameterisation).
With  $s,\xi$ as above and setting $\tau = t$, 
then elementary calculations yield 
\[
\del_{x^1} =  \dfrac{\tgt^1 }{L_1} \del_s -\dfrac{\tgt^2}{\delta} \del_\xi, \qquad \
\del_{x^2} =  \dfrac{\tgt^2 }{L_1} \del_s +\dfrac{\tgt^1}{\delta} \del_\xi, 
\qquad \
\del_t     =   A_0^1\del_s  + A_0^2 \del_\xi  +  \del_\tau,
\]where $A_0 = \binom{A_0^1}{A_0^2} = \binom{ -x_\tau \cdot \tgt/L_1 }{ -x_\tau \cdot \nrml/\delta 
} $, 
and $\dd{x}$ under the change of variables is $\dd{x^1}\dd{x^2} = \left|\det \dv{(x^1,x^2,t)}{(s,\xi,\tau)} \right|\dd{s}\dd{\xi} = L_1 \delta \dd{s}\dd{\xi}$.

\subsection{Equation in tubular neighbourhood coordinates}

Define $\Omega$ as $\theta$ expressed in the  tubular neighbourhood coordinates, $\Omega(s,\xi,\tau) = \theta(x,t)$. Then the gradient of $\theta$ and its perpendicular can be written as
\[  \grad\theta = \tgt \frac{\Omega_s}{L_1 }  + \nrml \frac{\Omega_\xi}{\delta} ,
 \qquad
\grad^\perp\theta = \nrml \frac{\Omega_s}{L_1 }  -\tgt  \frac{\Omega_\xi}{\delta} . \]

Since $ \myvel(x,t) = \int_{\mathbb R^2} \frac{\nabla^\perp\theta(x_*) }{|x-x_*|^{1+\alpha}} \dd{x_*}$ from \eqref{eqn-singular-sqg}  we obtain the following expression for $u$ in terms of $\Omega$ (using  $x=x(s,\xi,\tau)$ and $x_*=x(s_*,\xi_*,\tau)$),
\begin{align}
\myvel(x) & = \iint_{\mathbb T \times [-1,1]} \frac{\delta \nrml_*\Omega_{s*} - L_{1*}\tgt_*\Omega_{\xi*} }{|x-x_*|^{1+\alpha} } \dd{\xi_*} \dd{s_*}.
\end{align}
Then since
\begin{align*}  \nabla \theta(x) \cdot \nabla^\perp\theta(x_*)
&=\nrml_* \cdot\tgt \frac{\Omega_{s*} \Omega_s }{L_{1*}L_1} 
+ \nrml_*\cdot\nrml \frac{\Omega_{s*} \Omega_\xi  }{\delta L_{1*}} 
\\ 
&\quad- \tgt_* \cdot \tgt \frac{ \Omega_{\xi*} \Omega_s}{\delta L_1} 
- \tgt_* \cdot \nrml \frac{\Omega_{\xi*}  \Omega_\xi }{\delta^2},
\end{align*}
we can write the $\myvel \cdot \nabla \theta$ term in the equation \eqref{eqn-general-evolution} as
\begin{align*}
    \myvel\cdot\grad\theta(x) 
 &=
  \iint_{\mathbb T\times [-1,1]} \frac{1}{|x - x_*|^{1+\alpha}} 
\big(
\nrml_* \cdot\tgt \frac{\delta \Omega_{s*} \Omega_s }{L_1} 
+ \nrml_*\cdot\nrml \Omega_{s*} \Omega_\xi   
\\ 
&\quad- \tgt_* \cdot \tgt \frac{L_{1*} \Omega_{\xi*} \Omega_s}{ L_1} 
- \tgt_* \cdot \nrml \frac{L_{1*} \Omega_{\xi*}  \Omega_\xi }{\delta}  
 \big)
  \dd{\xi}_* \dd{s}_*. \tag{\theequation}\label{u-grad-theta-in-new-coords} 
\end{align*}
Therefore, using $\frac{L_{1*}}{L_1}  = 1 - \frac{\delta(\xi_* \kappa_* - \xi \kappa)}{1-\delta \kappa \xi}$, the equation \eqref{eqn-general-evolution} can be written in the new coordinates as  (using $\nabla^\perp \Omega_* \cdot \nabla\Omega = \Omega_{s*}\Omega_\xi  - \Omega_{\xi*}\Omega_s$)
\begin{align*}
        0 &= \Omega_\tau +
          \Omega_s
        \left(
            A_0^1 
             +\iint_{\mathbb T\times[-1,1]}  \frac{\delta \nrml_*\cdot\tgt}{L_1 |x_*-x|^{1+\alpha}}\Omega_{s*}  \dd{\xi}_* \dd{s}_* \right.\\ 
        &\ \ \left.+ \iint_{\mathbb T\times[-1,1]} \frac{ \delta (\xi_*\kappa_* - \xi\kappa) \tgt_* \cdot \tgt}{(1-\delta\xi\kappa)|x_* - x|^{1+\alpha}} \Omega_{\xi*}   \dd{\xi}_* \dd{s}_*
            \right) \\
        &\ \  + \left( \iint_{\mathbb T\times[-1,1]}  \frac{  \tgt_* \cdot \tgt}{|x_* - x|^{1+\alpha}} \grad^\perp\Omega_*     \dd{\xi}_* \dd{s}_* \right) \cdot \grad\Omega  \\
        &\ \    + \frac{1}{\delta}\Omega_{\xi}\br{\iint_{\mathbb T\times[-1,1]} \frac{-L_{1*}\tgt_*\cdot\nrml}{|x_*-x|^{1+\alpha} }  \Omega_{\xi*}   \dd{\xi}_* \dd{s}_* - \deltgperp}. 
				\label{eqn-sqg-v2a1} \tag{\theequation}  
\end{align*}

\subsection{An approximate equation for an almost-sharp front}\label{section-approx-eqn}
We rewrite \eqref{eqn-sqg-v2a1} as (this defines the terms $I_i$) 
\begin{align}\label{eqn-symbolic}
0 =  \Omega_{\tau} +\Omega_s(A^1_0 + \delta I_1 + \delta I_2) + I_3\cdot\grad \Omega + \frac{\Omega_\xi}\delta (I_4 - \deltgperp), \end{align}
where $A^1_0 = \frac{-\mycurve_\tau \cdot \tgt}L+ O(\delta)$, and aside from explicit dependencies,
$I_i$ depends on $\delta$ through the coordinate function $x=\mycurve+\delta\xi \nrml $ in \eqref{eqn-tub-coords}. 

We now want to find the leading order behaviour in $\delta\ll 1$. Asymptotic expansions for the integrals in \eqref{eqn-sqg-v2a1} will be obtained using Lemma \ref{prop-asymptotic-integral} and Corollary \ref{cor-to-asymptotic-lemma}, which are deferred  to Appendix \ref{sectionAppendix}. 
\begin{thm}[Approximate Equation for an \ASF]\label{thm-approx-eqn} Given $\Omega$, a $\delta$-ASF solution of the generalised SQG equation (in the sense of Definition \ref{defn-ASF},   tubular neighbourhood of the $\mycurve$) we have
\begin{align}
 & o(1) 
= \del_\tau \Omega - \frac{\deltgpar }{L} \del_s\Omega 
 \notag \\
& \,\,\,  +(2+2\alpha)L\int_{-1}^1\int_{\mathbb T} \frac{\tgt_*\cdot \nrml }{|\mycurve-\mycurve_*|^{3+\alpha}}   (\mycurve-\mycurve_*)\cdot(\xi\nrml - \xi_*\nrml_*)\del_\xi\Omega_* \dd{s}_* \dd{\xi}_* \del_\xi\Omega 
\\ 
& \,\,\,+  
	     \int_{-1}^1\int_{\mathbb T}
	     \frac{  L\kappa_* \tgt_*\cdot \nrml  }
	       {|
	       \mycurve - \mycurve_*|^{1+\alpha}} \xi_* \del_{\xi}\Omega_{*}      \dd{s_*}  \dd{\xi}_* \del_{\xi}\Omega  \notag  \\
& \,\,\, +\frac{C_{1,\alpha}\delta^{-\alpha}}{L}\int_{-1}^1 \frac{\grad\Omega_*^\perp|_{s_*=s}}{|\xi-\xi_*|^{\alpha}}\dd{\xi}_*  \cdot \grad\Omega \notag
			+\frac{C_{2,\alpha}}{L^{1+\alpha}}\int_{-1}^1 \grad\Omega^\perp|_{s_*=s}  \dd{\xi}_* \cdot \grad\Omega 
			\\ 
	     &\,\,\,
			+ \int_{-1}^1\int_\mathbb T 
        \frac{
            \tgt_*\cdot\tgt \nabla^\perp \Omega_*\cdot\nabla\Omega
            }{
            |\mycurve - \mycurve_*|^{(1+\alpha)/2}
            } 
        - 
        \frac{
            \nabla^\perp \Omega_*|_{s_*=s}\cdot\nabla\Omega
            }{
            L^{1+\alpha}|\mysin(s-s_*)|^{1+\alpha}
            } 
        \dd{s_*}\dd{\xi_*} \tag{\theequation} \label{eqn-full-approx-omega}.
\end{align}
\end{thm}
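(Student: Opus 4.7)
The plan is to start from equation \eqref{eqn-sqg-v2a1} in its compact form \eqref{eqn-symbolic} and expand each of the four integral clusters $I_1,\dots,I_4$ term by term as $\delta\to 0$, retaining contributions of size $O(1)$ and showing the rest is $o(1)$. The essential inputs beyond Frenet-frame algebra are the asymptotic Lemma~\ref{prop-asymptotic-integral} and Corollary~\ref{cor-to-asymptotic-lemma} from the appendix, together with the sharp-front contour dynamics equation \eqref{eqn-generic-cde}.

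The easy pieces go first. The $\Omega_\tau$ term is unchanged, and $A_0^1 = -x_\tau\cdot\tgt/L_1$ expands to $-\mycurve_\tau\cdot\tgt/L + O(\delta)$, yielding the second term in \eqref{eqn-full-approx-omega}. Each of $I_1$ and $I_2$ is uniformly bounded in $\delta$ once the kernel is expanded around $|\mycurve-\mycurve_*|^{-(1+\alpha)}$ via Lemma~\ref{prop-asymptotic-integral}, so the contributions $\delta I_1\Omega_s$ and $\delta I_2\Omega_s$ vanish in the limit.

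The central step is the analysis of the singular combination $(I_4-\mycurve_\tau\cdot\nrml)/\delta$. At leading order, replacing $L_{1*}$ by $L$ and $|x-x_*|$ by $|\mycurve-\mycurve_*|$ in $I_4$, and using $\int_{-1}^1\Omega_{\xi*}\dd{\xi_*}=1$, reduces the integral to $-L\int_\mathbb T(\tgt_*\cdot\nrml)|\mycurve-\mycurve_*|^{-(1+\alpha)}\dd{s_*}$; by the sharp-front CDE \eqref{eqn-generic-cde} this equals $\mycurve_\tau\cdot\nrml$, and the apparent $\delta^{-1}$ singularity cancels. Pushing the expansion one order further,
\[ \frac{1}{|x-x_*|^{1+\alpha}} = \frac{1}{|\mycurve-\mycurve_*|^{1+\alpha}} - (1+\alpha)\delta\frac{(\mycurve-\mycurve_*)\cdot(\xi\nrml-\xi_*\nrml_*)}{|\mycurve-\mycurve_*|^{3+\alpha}} + O(\delta^2), \]
together with $L_{1*}=L(1-\delta\kappa_*\xi_*)+\cdots$, produces after dividing by $\delta$ and collecting geometric coefficients precisely the $(2+2\alpha)$-prefactor and $\kappa_*$ integrals in the statement.

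The remaining term $I_3\cdot\grad\Omega$ is split into a near-diagonal piece ($|s-s_*|$ small) and a far piece, with the far piece uniformly bounded and contributing only to the last integral in \eqref{eqn-full-approx-omega}. In the near piece, $|x-x_*|^{1+\alpha}$ is replaced by its first-order tubular approximation $L^{1+\alpha}(|\mysin(s-s_*)|^2+\delta^2|\xi-\xi_*|^2)^{(1+\alpha)/2}$, after which one subtracts and adds the reference integrand with $\grad^\perp\Omega_*$ frozen at $s_*=s$ and denominator $L^{1+\alpha}|\mysin(s-s_*)|^{1+\alpha}$; the remainder is precisely the principal-value integral in the last line of \eqref{eqn-full-approx-omega}, while the subtracted reference, after evaluating the $s_*$-integral of $(|\mysin(s-s_*)|^2+\delta^2|\xi-\xi_*|^2)^{-(1+\alpha)/2}-|\mysin(s-s_*)|^{-(1+\alpha)}$ using Corollary~\ref{cor-to-asymptotic-lemma}, delivers the $C_{1,\alpha}\delta^{-\alpha}|\xi-\xi_*|^{-\alpha}$ kernel together with the $C_{2,\alpha}/L^{1+\alpha}$ correction. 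The main obstacle is precisely this bookkeeping: isolating the pieces of $I_3$ and $I_4$ that produce divergences of sizes $\delta^{-1}$ and $\delta^{-\alpha}$, cancelling the former against $\mycurve_\tau\cdot\nrml/\delta$ from the outset, and keeping the latter as the genuine leading-order singularity $C_{1,\alpha}\delta^{-\alpha}$ in the final equation.
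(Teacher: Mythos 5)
Your overall strategy --- expanding $I_1,\dots,I_4$ of \eqref{eqn-symbolic} via Lemma~\ref{prop-asymptotic-integral} and Corollary~\ref{cor-to-asymptotic-lemma}, identifying the cancellation between the leading part of $I_4$ and $\mycurve_\tau\cdot\nrml$ via the CDE, and isolating the $\delta^{-\alpha}$ singularity in $I_3$ --- matches the paper. But there is a concrete arithmetic discrepancy you should resolve. Your Taylor formula for the kernel,
\[ \frac{1}{|x-x_*|^{1+\alpha}} = \frac{1}{|\mycurve-\mycurve_*|^{1+\alpha}} - (1+\alpha)\delta\,\frac{(\mycurve-\mycurve_*)\cdot(\xi\nrml-\xi_*\nrml_*)}{|\mycurve-\mycurve_*|^{3+\alpha}} + O(\delta^2), \]
is correct, and pushing it through $I_{4,1}$ yields a first-order correction proportional to $(1+\alpha)L$, not the $(2+2\alpha)L$ appearing in \eqref{eqn-full-approx-omega}; so your claim that this `produces precisely the $(2+2\alpha)$-prefactor' does not follow from your own expansion. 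The paper obtains $(2+2\alpha)$ by invoking Corollary~\ref{cor-to-asymptotic-lemma}, whose stated formula for $H'(0)$ carries the coefficient $(-1-\alpha)$ in front of $\partial_\delta g$; since $\partial_\delta\bigl(|g|^{-(1+\alpha)/2}\bigr) = -\tfrac{1+\alpha}{2}|g|^{-(3+\alpha)/2}\partial_\delta g$, a factor of $\tfrac12$ is missing there, and combining that with $\partial_\delta g(s_*,0) = 2(\xi_*\nrml_*-\xi\nrml)\cdot(\mycurve_*-\mycurve)$ is where the stray $2$ enters. You need to reconcile your direct expansion with the stated theorem rather than asserting agreement.

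A smaller point of attribution: the $C_{1,\alpha}\delta^{-\alpha}|\xi-\xi_*|^{-\alpha}$ kernel and the $C_{2,\alpha}$ constant come out of Lemma~\ref{prop-asymptotic-integral} itself (from the integral $J$ computed there), not from Corollary~\ref{cor-to-asymptotic-lemma}; the Corollary only supplies the first-order Taylor correction in $\delta$ of the remaining non-singular $s_*$-integral after the $\tau$-dependent terms are peeled off. Your near-diagonal rewriting of $|x-x_*|^{1+\alpha}$ as $L^{1+\alpha}\bigl(|\mysin(s-s_*)|^2+\delta^2|\xi-\xi_*|^2\bigr)^{(1+\alpha)/2}$ effectively re-derives the content of the Lemma rather than applying it black-box, which is acceptable, but the roles of the Lemma and Corollary should be kept straight so the error budget is tracked correctly.
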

\begin{rem} In \eqref{eqn-full-approx-omega}, the integral term
\[  (2+2\alpha)L\int_{-1}^1\int_{\mathbb T} \frac{\tgt_*\cdot \nrml }{|\mycurve-\mycurve_*|^{3+\alpha}}   (\mycurve-\mycurve_*)\cdot(\xi\nrml - \xi_*\nrml_*)\del_\xi\Omega_* \dd{s}_* \dd{\xi}_* \del_\xi\Omega \]    
is well defined, despite the strength of the
 singularity in the denominator, since  the dot product introduces a  cancellation as long as say, $\mycurve \in C^3(\mathbb T)$. 
\end{rem}

\begin{proof}[Proof of Theorem \ref{thm-approx-eqn}]

To use the lemma, we will perform an approximation of the integral in $s_*$ for fixed $\xi,\xi_*,$ and $s$.
For given $s,\xi,\xi_*$, when applying Lemma \ref{prop-asymptotic-integral},  we will write $a_* = a(s_*) = a(s,s_*,\xi,\xi_*),$ and $a=a(s) = a(s,s,\xi,\xi_*)$.

We  rewrite  $I_1,I_2,I_3,I_4$ in \eqref{eqn-symbolic} in a form suitable for Lemma \ref{prop-asymptotic-integral}. We have   $x = \mycurve + \xi\delta \nrml$,  and $
    x_* = \mycurve + \xi_*\delta\nrml_* $ and so
\begin{align*}
    x_{s*} = \partial_{s_*} x_* &= L(1-\xi_*\delta\kappa_* )T_*,\\
    \tau^2 &= \delta^2(\xi_*-\xi)^2,\\
    g(s_*) &= |x_*-x|^2 - \tau^2,\\
    g'(s_*) &= 2x_{s*} \cdot x_* -2 x_{s*} \cdot x = 2 x_{s*} \cdot (x_* - x),\\
    g''(s_*) &= 2 | x_{s*}|^2 + 2  x_{ss*} \cdot (x_*-x) \\ &= 2L^2(1-\xi_*\delta\kappa_*)^2+2x_{ss*}\cdot (x_*-x).
\end{align*}
Since (as a function of $s_*$ with $s,\xi,\xi_*$ fixed) $g'(s)=0$ and $g''(s)>0$ for $\delta\ll 1$, $g$ has a non-degenerate minimum at $s_*=s$, with $g(s)=\tau^2-\tau^2 = 0$. As the curve $\mycurve$ has no self-intersections, this is the unique minimum. Therefore we have (as in  Lemma \ref{prop-asymptotic-integral})
\begin{equation} \label{eqn-definitions-to-apply-asymp-lem}
G(\delta) = \sqrt{\frac12 g''(s)} = L(1-\delta\kappa\xi_*).
\end{equation}
(Note that $G\neq L_1$, defined earlier as $L(1-\delta\kappa\xi)$.)
 For  $a_*=a(s_*)$, Lemma \ref{prop-asymptotic-integral} gives the following expansion for $I = \int_{\mathbb T} \frac{a_*}{|g_* + \tau^2 |^{(1+\alpha)/2}} \dd{s}_*$,  
\begin{align}
    I 
    &= \frac{a(s)}{G} C_{1,\alpha} \tau^{-\alpha} 
    + \frac{a(s)C_{2,\alpha}}{ G^{1+\alpha} } \label{eqn-asymplemma-appl1} \\
    &+ \int_{\Torus} \frac{a_*}{|g_*|^{(1+\alpha)/2}} 
        - \frac{a(s)}{G^{1+\alpha}|\mysin(s_*-s)|^{1+\alpha}} \dd{s}_*   
    + O(\tau^{2-\alpha}),
    \quad  \tau\to 0,  \label{eqn-asymplemma-appl2}
\end{align}  
where $\mysin$ is the rescaled $\mysin(s) := \sin(\pi s)/\pi$, and $C_{1,\alpha}, C_{2,\alpha}$ are two known constants. Also, with $g_*=g(s_*,\delta),G=G(\delta)$,
\begin{align}    \partial_\delta g(s_*) 
&= 2\del_\delta (x_*-x)\cdot (x_*-x) - 2\delta (\xi_* - \xi)\\
& =2(\xi_*\nrml_* - \xi\nrml)\cdot(\mycurve_*-\mycurve)+ 2\delta|\xi_*\nrml_* - \xi\nrml|^2 - 2\delta|\xi_*-\xi|^2,\\
\partial_\delta G &= - L \kappa \xi_* ,
\end{align}
so that
\begin{align} \del_\delta g(s_*,0) &=  2(\xi_*\nrml_* - \xi\nrml)\cdot(\mycurve_*-\mycurve),
\\
\del_\delta (G^{-1-\alpha})(\delta) &= (-1-\alpha)G^{-2-\alpha}\del_\delta G = \frac{(1+\alpha)}{L^{1+\alpha}(1-\xi_*\delta\kappa)^{2+\alpha}}\kappa \xi_* . \end{align}
 Corollary \ref{cor-to-asymptotic-lemma} in this setting gives
\begin{align*}
    I 
    &= \frac{a(s)}{L} C_{1,\alpha} \delta^{-\alpha}|\xi_*-\xi|^{-\alpha} 
    + \frac{a(s)C_{2,\alpha}}{ L^{1+\alpha} } 
    \\
    &+ \int_{\Torus} \frac{a(s_*)}{|g(s_*,0)|^{(1+\alpha)/2}} 
        - \frac{a(s)}{L^{1+\alpha}|\mysin(s_*-s)|^{1+\alpha}} \dd{s}_*  \\
    &+ \delta \int_{\Torus} \frac{(-2-2\alpha)a(s)}{|g(s_*,0)|^{(3+\alpha)/2}} (\xi_*\nrml_* - \xi\nrml)\cdot(\mycurve_*-\mycurve) - \frac{\del_\delta (G^{-1-\alpha})(0)a(s)}{|\mysin s|^{1+\alpha}} \dd{s_*} \\
    &+ O(\delta^{1-\alpha}).
\end{align*}  
The terms $I_1,I_2$ are simpler.
For $I_1$, $a(s_*) = \frac{\nrml_* \cdot \tgt}{L(1-\delta\kappa \xi)} \Omega_{s*}$, with $a(s) = 0$. Lemma \ref{prop-asymptotic-integral} gives
\[ \delta I_1 = \delta \int_{-1}^1\int_{\mathbb T} \frac{a_*}{|g_*|^{(1+\alpha)/2}} \dd{s_*}\dd{\xi_*} + O(\delta^{2-\alpha}) = O(\delta).
\]
For $I_2$, $a(s_*) = \frac{(\xi_*\kappa_* - \xi\kappa)\tgt_* \cdot \tgt }{1-\delta \xi \kappa}\Omega_{\xi*}$. Thus, Lemma \ref{prop-asymptotic-integral} gives
\begin{align*} &\delta I_2 = \delta^{1-\alpha} \int_{-1}^1  |\xi-\xi_*|^{-\alpha}\frac{ a}{L} C_{1,\alpha} \dd{\xi_*} + \delta \frac{a C_{2,\alpha}}{L^{1+\alpha}} 
\\ 
&+ \delta \int_{\mathbb T}  \frac{a_*}{|g_*|^{(1+\alpha)/2}} 
        - \frac{\pi^{1+\alpha}a }{L^{1+\alpha}|\sin(\pi (s_*-s))|^{1+\alpha}}  \dd{s}_*  + O(\delta^{1-\alpha}) \\
        &= O(\delta^{1-\alpha}).
\end{align*}
For $I_4$, since $L_{1*} = L(1-\delta \kappa_* \xi_*)$, we write $I_4 = I_{4,1} + \delta I_{4,2}$, with
\begin{align*}
    I_{4,1} &= \int_{-1}^1\int_{\mathbb T} \frac{-L\tgt_*\cdot\nrml}{|x_*-x|^{1+\alpha} }  \Omega_{\xi*} \  \dd{\xi}_* \dd{s}_*, \\ 
    I_{4,2} &= \int_{-1}^1\int_{\mathbb T} \frac{L\kappa_*\xi_*\tgt_*\cdot\nrml}{|x_*-x|^{1+\alpha} }  \Omega_{\xi*} \  \dd{\xi}_* \dd{s}_*. 
\end{align*}
For $I_{4,1}$, $a(s_*)= -L \tgt_* \cdot \nrml \Omega_{\xi*}$, and $a(s) = 0$.   Therefore, Lemma \ref{prop-asymptotic-integral} 
and Corollary \ref{cor-to-asymptotic-lemma} gives (since $\del_\delta g(s_*,0) = L(x-x_*)\cdot (\xi\nrml  - \xi_*\nrml_*)$)
\begin{align*} I_{4,1}     
&=  -L\int_{\mathbb T} \frac{  \tgt_* \cdot\nrml  \int_{-1}^1\Omega_{\xi*}\dd{\xi_*}}{|z-z_*|^{1+\alpha}} \dd{s_*} 
\\
& + (1+\alpha) \delta \int_{-1}^1\int_{\mathbb T}  
	     \frac{2 L  \tgt_* \cdot\nrml  }{|\mycurve-\mycurve_*|^{3+\alpha}}   (\mycurve-\mycurve_*)\cdot(\xi\nrml - \xi_*\nrml_*) \nonumber \del_\xi\Omega_* \dd{s}_* \dd{\xi}_* \\ &+ O(\delta^{2-\alpha}) .
\end{align*}
Since we have chosen $\int_{-1}^1\Omega_{\xi*}\dd{\xi_*} = 1$, the first term is equal to the sharp front evolution term $-\mathcal I (\mycurve) \cdot\nrml = \mycurve_\tau \cdot \nrml$, and 
\begin{align}
     &\frac1{\delta}(I_{4,1} - \mycurve_\tau \cdot \nrml  )
     \\ &= (1+\alpha)\int_{-1}^1\int_{\mathbb T}  
	     \frac{2 L  \tgt_* \cdot\nrml  }{|\mycurve-\mycurve_*|^{3+\alpha}}   (\mycurve-\mycurve_*)\cdot(\xi\nrml - \xi_*\nrml_*) \del_\xi\Omega_* \dd{s}_* \dd{\xi}_* + o(1).  
\end{align}
For $I_{4,2}$, $a(s_*) = L\kappa_*\xi_*  \tgt_* \cdot \nrml  \Omega_{\xi*}$. Lemma \ref{prop-asymptotic-integral} and Corollary \ref{cor-to-asymptotic-lemma} give
 \begin{align*}
 I_{4,2} & = \int_{-1}^1\int_{\mathbb T} \frac{L\kappa_*\xi_*\tgt_*\cdot\nrml \Omega_{\xi*}} {|x_*-x|^{1+\alpha} }     \dd{\xi}_* \dd{s}_* 
\\
 & 
= \int_{\mathbb T} \frac{L\kappa_* \tgt_*\cdot\nrml \int_{-1}^1 \xi_* \Omega_{\xi*}\dd{\xi}_* }{|\mycurve_*-\mycurve|^{1+\alpha} }     \dd{s}_*  + O(\delta).
 \end{align*}
 For $I_3$, $a(s_*) = \tgt_*\cdot \tgt \nabla^\perp \Omega_*$. Applying Lemma \ref{prop-asymptotic-integral} gives
 \begin{align*}
     I_3 
     &= \int_{-1}^1\Bigg( \frac{C_{1,\alpha}}{L(1-\delta\kappa \xi_*) \delta^\alpha}  \frac{\nabla^\perp \Omega_*|_{s_*=s}}{|\xi-\xi_*|^\alpha}
     + \frac{C_{2,\alpha} }{(L(1-\delta\kappa \xi_*))^{1+\alpha}} \nabla^\perp \Omega_*|_{s_*=s}
     \\
     & + \int_{\mathbb T} 
        \frac{
            \tgt_*\cdot\tgt \nabla^\perp \Omega_*
            }{
            |g_*|^{(1+\alpha)/2}
            } 
        - 
        \frac{  
            \pi^{1+\alpha}\nabla^\perp \Omega_*|_{s_*=s}
            }{
            (L(1-\delta\kappa \xi_*))^{1+\alpha}|\sin(\pi(s-s_*))|^{1+\alpha}
            } 
        \dd{s_*} \Bigg) \dd{\xi_*} \\
        &+ O(\delta^{1-\alpha}).
 \end{align*} 
 The remaining integral in $s_*$ is dealt with by applying Corrolary \ref{cor-to-asymptotic-lemma},
 \begin{align*}
     I_3
     &= \frac{C_{1,\alpha}}{L \delta^\alpha} \int_{-1}^1 \frac{\nabla^\perp \Omega_*|_{s_*=s}}{|\xi-\xi_*|^\alpha} \dd{\xi_*} 
     + \frac{C_{2,\alpha} }{L^{1+\alpha}} \int_{-1}^1 \nabla^\perp \Omega_*|_{s_*=s}\dd{\xi_*}
     \\
     &  + \int_\mathbb T 
        \frac{
            \tgt_*\cdot\tgt \int_{-1}^1\nabla^\perp \Omega_*\dd{\xi_*}
            }{
            |\mycurve - \mycurve_*|^{1+\alpha}
            } 
        - 
        \frac{
            \pi^{1+\alpha}\int_{-1}^1\nabla^\perp \Omega_*|_{s_*=s}\dd{\xi_*}
            }{
            L^{1+\alpha}|\sin(\pi(s-s_*))|^{1+\alpha}
            } 
        \dd{s_*}     + O(\delta^{1-\alpha}),
 \end{align*}
 completing the proof.
 \end{proof}
 
%

 
\section{A regularisation by integration across the Almost-Sharp Front}
\label{section-h-equation}
As noticed in \cite{fefferman2012almost}, the most singular terms in $\grad\Omega^\perp \cdot \grad\Omega_*|_{s_*=s} $  in the  equation disappear upon integration in $\xi$ over $[-1,1]$, since for any even integrable function $F$ and two functions $f,g$, we have the cancellation 
\[ \iint_{\xi,\xi_* \in [-1,1]} F(\xi - \xi_*)\big[ f(\xi)g(\xi_*) - f(\xi_*) g(\xi) \big] \ \dd{\xi}\dd{\xi}_* = 0. \]
The integrals with $\grad\Omega^\perp \cdot \grad\Omega_*|_{s_*=s} $ are of this form, with $f=\del_s\Omega$ and $g=\del_\xi\Omega$, for  fixed  $s$. In particular, the term of order $\delta^{-\alpha}$ can be written in this form. 
 This motivates shifting $\Omega$  by a constant so that  
\begin{equation}
    \Omega(s,\xi,t) = \begin{cases}
        -\frac12 & \xi \leq -1,\\
        C^2\text{ smooth} &\xi \in [-1,1],\\
        \frac12 & \xi \ge 1,
    \end{cases}\label{eqn-omega-pmonehalf}
\end{equation}
and we make the definition
\[h(s) := \int_{-1}^1 \Omega  \dd{\xi}. \]
The identities $
        \int_{-1}^1 \xi \del_\xi \Omega(s,\xi)   \dd{\xi}
        = -h(s)$, and
$        \int_{-1}^1 \del_\xi \Omega(s,\xi)  \dd{\xi} = 1$ 
yield
\begin{align*}
    o(1) 
     &= \del_\tau h -  \frac{\deltgpar }{L} h' 
     \\ &+(2+2\alpha)L \int_{s_*} \frac{\tgt_*\cdot \nrml }{|\mycurve-\mycurve_*|^{3+\alpha}}   (\mycurve-\mycurve_*)\cdot(h \nrml - h_*\nrml_*)   \dd{s}_* \\
     & +  
	     \int_{\mathbb T}
	     \frac{  L\kappa_* \tgt_* \cdot \nrml}
	       {|
	       \mycurve - \mycurve_*|^{1+\alpha}}   h_*    \dd{s_*}   + \int_{\mathbb T}
            \frac{\tgt_* \cdot \tgt(h'_*-h')}{|\mycurve - \mycurve_*|^{1+\alpha}} 
          \dd{s_*}  .
\end{align*}
Namely, in the limit $\delta \to 0$, $h$ evolves via a linear homogenous integrodifferential equation that doesn't depend on $\Omega$. Rewriting the $\xi_*$ integrals in \eqref{eqn-full-approx-omega} using the two further identities
\[\int_{-1}^1 (\xi \nrml - \xi_* \nrml_*) \del_\xi \Omega_* \dd{\xi} = (\xi\nrml + h_* \nrml_*), \quad \int_{-1}^1 \grad\Omega^\perp|_{s_*=s} \dd{\xi_*} = \begin{pmatrix}
    -1 \\ h'
\end{pmatrix},  \] 
we can treat $h$ as an independently evolving function coupled with $\Omega$ via the following equation, 
\begin{equation}
    \begin{split}
        o(1) &= \del_\tau \Omega - \frac{\mycurve_\tau \cdot T }{L} \del_s\Omega \\
& \quad + (2+2\alpha)L\int_{\mathbb T} \frac{\tgt_*\cdot \nrml }{|\mycurve-\mycurve_*|^{3+\alpha}}   (\mycurve-\mycurve_*)\cdot(\xi \nrml + h_* \nrml_*) \dd{s}_*  \del_\xi\Omega \\ 
&\quad +  
	     \int_{\mathbb T}
	     \frac{  L\kappa_* \tgt_* \cdot \nrml}
	       {|
	       \mycurve - \mycurve_*|^{1+\alpha}}   h_*    \dd{s_*} \del_\xi\Omega \\
& \quad +\frac{C_{1,\alpha}\delta^{-\alpha}}{L}\int_{-1}^1 \frac{\grad\Omega_*^\perp|_{s_*=s}}{|\xi-\xi_*|^{\alpha}}\dd{\xi}_*  \cdot \grad\Omega 
			+\frac{C_{2,\alpha}}{L^{1+\alpha}} \binom{-1}{h'} \cdot \grad\Omega \\ 
	     &\quad + \int_{\mathbb T}\br{\frac{\tgt_* \cdot \tgt\binom{-1}{h'_*}}{|\mycurve - \mycurve_*|^{1+\alpha}} - \frac{\binom{-1}{h'}}{L^{1-\alpha}|\mysin(s-s_*)|^{1+\alpha}}}  \dd{s}_* \cdot\grad\Omega.
    \end{split}
\end{equation}
Thus, using $h$ reduces the understanding of the evolution of $\Omega$ to understanding  a nonlinear system, where the main nonlinearity is in the single term $L^{-1}C_{1,\alpha}\delta^{-\alpha}\int_{-1}^1 \frac{\grad\Omega_*^\perp|_{s_*=s}}{|\xi-\xi_*|^{\alpha}}\dd{\xi}_*  \cdot \grad\Omega$.

For SQG, this term can be expressed in terms of $h$ and $\Omega$, as a bilinear map, making the analysis completely different (see \cite{fefferman2015construct} for more details).%
%
\section{Evolution of compatible curves }
\label{section6}
A sharp front solution to \eqref{eqn-singular-sqg} is completely determined by the evolution of a curve.  In this section, by considering  how compatible curves are transported by \eqref{eqn-singular-sqg} in the regime $\delta\ll 1 $, we show that an almost-sharp front solution is approximately determined by the evolution of a compatible curve.

We will first present the main result of this section, Theorem \ref{prop-comp-curve-evo} by relying on a fractional Leibniz rule \eqref{fractional-leibniz}. 
Lemma \ref{lem-cheap-leibniz}  replaces the more complicated \eqref{fractional-leibniz} at the cost of a small loss in the error term. The proof is similar to  Lemma \ref{lem-fractional-sobolev-norm-of-indicator} (see \cite{faraco2013sobolev} - we present a short proof as the approach will be relevant for Lemma \ref{lem-cheap-leibniz}).

\begin{lem}\label{lem-fractional-sobolev-norm-of-indicator} 	Let $0<s<1/2$ and suppose $A\subseteq\mathbb R^d $ is a bounded set with $C^2$ boundary in $\Real^d$.
 Then $\indicator_A \in H^s(\mathbb R^d)$ with 
	\[\| \hlapl^{s} \indicator_A \|^2_{L^2} \lesssim_{d,s} |A|^{1-2s}.\]
\end{lem}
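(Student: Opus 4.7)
The plan is to expand the $H^s$ seminorm via the Gagliardo--Slobodeckij double integral and then reduce everything to a one-variable estimate on a negative power of the distance-to-boundary function. Using the classical identity
\[ \|\hlapl^s f\|_{L^2}^2 = c_{d,s}\iint_{\Real^d\times\Real^d} \frac{|f(x)-f(y)|^2}{|x-y|^{d+2s}}\dd{x}\dd{y} \]
together with the observation that for $f=\indicator_A$ the integrand equals $1$ precisely when one of $x,y$ lies in $A$ and the other in $A^c$, one obtains
\[ \|\hlapl^s \indicator_A\|_{L^2}^2 = 2 c_{d,s}\int_A\int_{A^c}\frac{\dd{y}\dd{x}}{|x-y|^{d+2s}}. \]

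Next, for each fixed $x\in A$, setting $\delta(x):=\operatorname{dist}(x,\partial A)$ and noting that $A^c\subseteq \Real^d\setminus B(x,\delta(x))$, polar coordinates centred at $x$ give
\[ \int_{A^c}\frac{\dd{y}}{|x-y|^{d+2s}}\le \frac{|S^{d-1}|}{2s}\,\delta(x)^{-2s}. \]
This reduces matters to the single-function estimate $\int_A\delta(x)^{-2s}\dd{x}\lesssim_{d,s}|A|^{1-2s}$, which absorbs all the geometry.

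For this last step I would use the $C^2$ assumption on $\partial A$. By the tubular neighbourhood theorem there exists $r_0>0$ (controlled by the curvatures of $\partial A$) such that the signed distance is $C^2$ within distance $r_0$ of $\partial A$ and the coarea formula gives $|\{x\in A : \delta(x)<r\}|\lesssim r\,\mathcal H^{d-1}(\partial A)$ for all $r<r_0$, while trivially $|\{\delta<r\}\cap A|\le |A|$ for $r\ge r_0$. Feeding these two bounds into the layer-cake identity
\[ \int_A \delta(x)^{-2s}\dd{x} = 2s\int_0^\infty r^{-2s-1}\,|\{x\in A:\delta(x)<r\}|\dd{r}, \]
splitting at $r_0$, and optimising in the splitting threshold yields the claimed inequality, with the perimeter and reach of $\partial A$ absorbed into the implicit constant.

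The only delicate point is the integrability at $r=0$: after the tubular estimate, the integrand on $(0,r_0)$ is of order $r^{-2s}$, which is locally integrable precisely because $s<1/2$. This is the unique role of the hypothesis $s<1/2$, and it reflects the well-known failure of $\indicator_A\in H^s$ for $s\ge 1/2$. Beyond that, the argument is essentially layer cake combined with a one-sided tubular measure estimate, and this simple structure is what will be reused in the proof of Lemma~\ref{lem-cheap-leibniz}.
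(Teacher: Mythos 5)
Your proof is correct, but it takes a genuinely different route from the paper's. After reducing to the Gagliardo double integral $\int_A\int_{A^c}|x-y|^{-d-2s}\,\dd{y}\,\dd{x}$ (common to both), the paper applies the layer-cake formula \emph{directly to the $2d$-dimensional measure} $m(\tau)=\mu\{x\in A,\,y\in A^c:|x-y|<\tau\}$, bounding $m(\tau)$ by $\tau^{d+1}$ for small $\tau$ (the factor $\tau^d$ from the ball $|x-y|<\tau$ times the factor $\tau$ from the collar $\{d(x,\partial A)<\tau\}$) and by $|A|\tau^d$ for large $\tau$, then splits the $\tau$-integral at $\varepsilon_0=|A|$. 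You instead first integrate out $y$ for fixed $x$ using radial coordinates and $A^c\subseteq B(x,\delta(x))^c$, reducing the whole problem to the one-variable estimate $\int_A \delta(x)^{-2s}\dd{x}\lesssim|A|^{1-2s}$, and only then invoke layer cake on this scalar quantity. Your reduction is arguably cleaner in that it isolates the geometric input into a single distance-function integral, and the role of $s<1/2$ (integrability of $r^{-2s}$ near $0$) becomes transparent. Two small remarks: first, if you actually minimise over the splitting threshold $R$ you obtain a bound of the form $\mathcal H^{d-1}(\partial A)^{2s}|A|^{1-2s}$, subject to the constraint $R\le r_0$; to reproduce the paper's form exactly you can simply take $R=|A|$ as the paper does, which yields the same $|A|^{1-2s}$ with the perimeter and reach hidden in the constant. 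Second, both your proof and the paper's implicitly assume the splitting threshold lies below the tubular radius $r_0$, which holds in the intended application ($A=A_\md$ with $|A|=O(\delta)$); neither proof is scale-invariant as literally written, so the $\lesssim_{d,s}$ should be read as also depending on the $C^2$ geometry of $\partial A$, a point your phrase ``absorbed into the implicit constant'' handles honestly. With that caveat your argument is a valid alternative and, like the paper's, adapts directly to the H\"older-indicator Leibniz rule that follows.
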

\begin{proof}  
We bound the Gagliardo seminorm $[\indicator_A]_{H^s}$ directly, which is known (see for instance \cite{dinezza2012hitchhiker}) to be equal to $\|\hlapl^s \indicator_A\|_{L^2}$ up to a constant depending on $s$ alone. By definition,
\[ [\indicator_A]_{H^s}^2 := \int_{\Real^d}\int_{\Real^d}\frac{|\indicator_A(x) - \indicator_A(y)|^2}{|x-y|^{d+2s}}  \dd{y} \dd{x}  = 2\int_{x\in A}\int_{y\in A^c} \frac{\dd{y} \dd{x}}{|x-y|^{d+2s}}.\]
Writing out a `layer cake' decomposition(see for example \cite[page 26]{lieb2001analysis}) with $\mu$ the $2d$-dimensional Lebesgue measure,

\begin{equation*}
    \begin{split}
[\indicator_A]_{H^s}^2 &= \int_0^\infty \mu\left[x\in A,y\in A^c : |x-y| < \frac{1}{t^{1/(d+2s)}}\right]  \dd{t}    \end{split}
\end{equation*}  
From this, we have $[\indicator_A]_{H^s}^2 =\int_0^\infty m\left(\frac{1}{t^{1/(d+2s)}}\right)  \dd{t},$where  $m(t) $ is \[ m(t) := \mu[x\in A,y\in A^c : |x-y| < t ]. \] 
For any set $U\subseteq \mathbb R^d$, define $(U)_\eps = \{x : d(x,U)< \eps  \}$. The change of variables $\tau = \frac{1}{t^{1/(2+2s)}}$, $\dd{t}= (d+2s)\tau^{-d-2s-1}\dd{\tau}$ yields
	\begin{equation*}
	        [\indicator_A]_{H^s}^2 
        = (2d+4s)\int_0^\infty \frac{m(\tau)}{\tau^{d+2s+1}}  \dd{\tau} 
        \lesssim_{d,s} \int_0^{\eps_0}\frac{m(\tau)}{\tau^{d+2s+1}}  \dd{\tau}  + \int_{\eps_0}^\infty \frac{m(\tau)}{\tau^{d+2s+1}}  \dd{\tau}. 
	\end{equation*}
For some $\epsilon_0$ to be chosen as follows. We can bound $m(\tau)$ using the following inclusion:
\[ \left\{ x\in A , y \in A^c : |x-y|<\tau \right\} \subset \left\{ x\in A , y \in B_{\mathbb R^d}(x,\tau)  : d(x,\del A)<\tau   \right\},\]
where $ B_{\mathbb R^d}(x,\tau) $ is the ball around $x$ of radius $\tau$, which implies
\begin{equation}
    \begin{split}
        m(\tau) &
        \le\mu[x\in A, y\in B_{\mathbb R^d}(x,\tau) :  d(x,\del  A)<\tau   ]
				\lesssim |B_{\mathbb R^d}(0,1)| \tau^{d+1} ,
    \end{split}
\end{equation}
since $(\del A)_\tau$ is  $O(\tau)$ by the $C^2$ regularity of the boundary. But for $\tau>|A|$, the following easier bound is better,
\begin{align}
    m(\tau) 
    \le \mu \left[ x\in A : y \in B_{\mathbb R^d} (x,\tau)\right]   
		\lesssim |B_{\mathbb R^d}(0,1)| |A|\tau^d.
\end{align}
 For the optimal bound, we choose $\epsilon_0 = |A|$ to define $I_1,I_2$, giving
\[ \int_0^{\eps_0}\frac{m(\tau)}{\tau^{d+2s+1}}  \dd{\tau} \lesssim_d \int_0^{|A|} \frac{\dd{\tau}}{\tau^{2s}} =\frac1{1-2s}|A|^{1-2s} \lesssim_{s} |A|^{1-2s} \]
and
\[ \int_{\eps_0}^\infty \frac{m(\tau)}{\tau^{d+2s+1}}  \dd{\tau} \lesssim_d |A| \int_{|A|}^\infty \frac{\dd{\tau}}{\tau^{2s+1}} = |A|\frac{|A|^{-2s}}{2s} \lesssim_{s} |A|^{1-2s}. \]
Combining these bounds gives the claimed bound on $[\mathbf 1_A]_{H^s}^2$.
\end{proof} 
We now present the main result of this section.
\begin{thm}\label{prop-comp-curve-evo}
Suppose that $\theta$ is an \ASF solution, and $\mycurve$ is a compatible curve as defined in \eqref{defn-comp-curve}. Then $\mycurve$  satisfies   the sharp front equation (in the weak sense) up to $O(\delta^{1-\alpha})$ errors,
\begin{equation*}
    \del_t \mycurve \cdot \nrml = \left( -\int_{\mathbb T} K(\mycurve-\mycurve_*) (\del_s \mycurve_* - \del_s \mycurve )  \dd{s}_*  \right)
\cdot \nrml+ O(\delta^{1-\alpha}).
\end{equation*}
\end{thm}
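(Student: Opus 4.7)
The plan is to compare the almost-sharp front velocity $\myvel(x) = (K\ast\grad^\perp\theta)(x)$ evaluated at a point $\mycurve(s)$ on the compatible curve with the sharp-front right-hand side $-\mathcal I(\mycurve)\cdot\nrml$ from \eqref{eqn-generic-cde}. Since $\theta$ agrees with the indicator $\indicator_A$ outside the tubular region $A_{\md}$ of width $O(\delta)$, the difference $\myvel(\mycurve)\cdot\nrml - (-\mathcal I(\mycurve)\cdot\nrml)$ is controlled by a singular convolution supported in $A_{\md}$. I want to show this difference is $O(\delta^{1-\alpha})$ and then identify $\mycurve_\tau\cdot\nrml$ with $\myvel(\mycurve)\cdot\nrml$ up to the same order.

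First, after an integration by parts one has $\myvel = K\ast\grad^\perp\theta$ with $\grad^\perp\theta$ supported in $A_{\md}$. Introducing the tubular coordinates $y = \mycurve(s_*) + \delta\xi_*\nrml_*$ and using $\grad^\perp\theta = \nrml_*\Omega_{s*}/L_{1*} - \tgt_*\Omega_{\xi*}/\delta$ together with Jacobian $\delta L_{1*}$ converts the convolution into
\[ \myvel(\mycurve(s)) = \int_{-1}^{1}\int_{\mathbb T} K(\mycurve-\mycurve_*-\delta\xi_*\nrml_*)\bigl(\delta\nrml_*\Omega_{s*} - L_{1*}\tgt_*\Omega_{\xi*}\bigr)\dd{s_*}\dd{\xi_*}. \]
Taking the normal component and replacing $K(\mycurve-\mycurve_*-\delta\xi_*\nrml_*)$ by $K(\mycurve-\mycurve_*)$ in the $\Omega_{\xi*}$-piece, the normalisation $\int_{-1}^{1}\Omega_{\xi*}\dd{\xi_*} = 1$ from \eqref{eqn-omega-pmonehalf} yields exactly $-L\int K(\mycurve-\mycurve_*)(\tgt_*\cdot\nrml)\dd{s_*} = -\mathcal I(\mycurve)\cdot\nrml$, identifying the sharp-front leading term.

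Second, I would estimate the residual errors by splitting the $s_*$-integration at the scale $|s-s_*|\sim\delta$. In the far regime the Taylor expansion $K(\mycurve-\mycurve_*-\delta\xi_*\nrml_*) - K(\mycurve-\mycurve_*) = O(\delta/|\mycurve-\mycurve_*|)K(\mycurve-\mycurve_*)$ combined with the linear vanishing $|\tgt_*\cdot\nrml|\lesssim|s-s_*|$ contributes $\delta\int_{|s-s_*|>\delta}|s-s_*|^{-1-\alpha}\dd{s_*} = O(\delta^{1-\alpha})$; in the near regime both kernel values are of size $\delta^{-1-\alpha}$ on a set of area $\delta$, and the same linear cancellation again gives $O(\delta^{1-\alpha})$. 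The $\delta\Omega_{s*}$-term has no geometric cancellation but carries an explicit $\delta$ prefactor, and a direct singular integral estimate contributes $\delta\cdot\delta^{-\alpha}=\delta^{1-\alpha}$. To avoid losing an $\epsilon$ in the exponent, it is convenient to package the analysis via Lemma \ref{lem-fractional-sobolev-norm-of-indicator}: the bound $\|\hlapl^{s}\indicator_{A_{\md}}\|_{L^2}^{2}\lesssim|A_{\md}|^{1-2s} = O(\delta^{1-2s})$ is the correct $L^2$-avatar of these pointwise estimates and couples cleanly with the fractional Leibniz rule \eqref{fractional-leibniz}.

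Third, the ASF equation $\del_\tau\theta + \myvel\cdot\grad\theta = 0$ means that $\theta$ is transported by $\myvel$. Since $\theta$ is $C^2$ and approximately constant along a compatible $\mycurve$, differentiating $\theta(\mycurve(s,\tau),\tau)$ in $\tau$ and applying \eqref{fractional-leibniz} to the commutator $(\mycurve_\tau - \myvel)\cdot\grad\theta$ gives $\mycurve_\tau\cdot\nrml = \myvel(\mycurve)\cdot\nrml + O(\delta^{1-\alpha})$ in the weak sense stated in the theorem; combined with the second step this yields the claim. The main obstacle is the singular integral bookkeeping in the second step: the kernel $K(x)=|x|^{-1-\alpha}$ is not locally integrable, and the linear vanishing of $\tgt_*\cdot\nrml$ at $s_*=s$ is only just enough for the claimed rate, with no margin. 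Obtaining exactly $O(\delta^{1-\alpha})$ rather than a weaker bound requires using Lemma \ref{lem-fractional-sobolev-norm-of-indicator} and the fractional Leibniz rule \eqref{fractional-leibniz} in a sharp form.
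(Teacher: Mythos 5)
The proposal takes a genuinely different route from the paper, and the critical gap is in your step 3.

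The paper works entirely in the weak formulation: it starts from $\iint \theta\,\del_t\phi + \theta\,\myvel\cdot\grad\phi = 0$ for test functions $\phi$, uses the divergence theorem on the $\del_t\phi\theta$ term (applied to the boundary of $A_{\init}$) to extract $-\int\int_{\del A}\phi\,\del_t\mycurve\cdot\nrml\,\dd{l}\,\dd{t}+O(\delta)$, and then decomposes the nonlinear term via $\theta = \indicator_{\init} + \theta\indicator_{\md}$ into four interaction terms $(\text{EVO})$, $(\text{A})$, $(\text{B})$, $(\text{C})$, which are estimated by kernel splitting at scale $\delta$, the fractional Leibniz rule \eqref{fractional-leibniz}, and Lemma \ref{lem-fractional-sobolev-norm-of-indicator}. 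Your step 2 is a pointwise version of the kernel estimates, but your step 3 --- where you try to infer $\mycurve_\tau\cdot\nrml = \myvel(\mycurve)\cdot\nrml + O(\delta^{1-\alpha})$ from ``$\theta$ is transported by $\myvel$'' and ``$\theta$ is approximately constant along a compatible curve'' --- does not hold. A compatible curve is an auxiliary geometric object (Definition \ref{defn-comp-curve}) that sits \emph{inside} the transition region $A_{\md}$ and is not a level set of $\theta$; indeed $\theta$ on $\mycurve$ is a nonconstant $C^2$ function taking values strictly between $0$ and $1$. Differentiating $\theta(\mycurve(s,\tau),\tau)$ in $\tau$ therefore yields no relation between $\mycurve_\tau$ and $\myvel$, and in any case the theorem is a statement about a chosen family $\mycurve(\cdot,t)$ whose time-derivative is not determined by the transport equation. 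The mechanism that makes $\del_t\mycurve\cdot\nrml$ appear is precisely the divergence-theorem step in the weak formulation, where $\iint \theta\,\del_t\phi$ is converted to a boundary integral on the curve bounding $A_{\init}$, which is within $O(\delta)$ of $\mycurve$; this is the step you are missing.

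A second, smaller point: your step 2 estimates (linear vanishing of $\tgt_*\cdot\nrml$, Taylor expansion of the kernel, near/far splitting at $|s-s_*|\sim\delta$) do produce $O(\delta^{1-\alpha})$ heuristically, and they are morally the same computation as the paper's control of $[(\text{A})+(\text{B})]$ and $[(\text{B})+(\text{C})]$. But the paper additionally isolates the self-interaction term $(\text{B}) = \int\grad^\perp K*(\theta\indicator_\md)\cdot\grad\phi\,\theta\indicator_\md$, which has no geometric cancellation from $\tgt_*\cdot\nrml$ and must be handled by the fractional Leibniz rule \eqref{fractional-leibniz} together with $\|\hlapl^{\alpha/2}(\theta\indicator_\md)\|_{L^2}\lesssim\delta^{(1-\alpha)/2}$. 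You mention this pair of tools as a ``packaging device'' at the end of step 2, but your pointwise splitting by $|s-s_*|$ does not naturally expose the $(\text{B})$ term; integrating by parts between the two copies of $\theta\indicator_\md$, as the paper does, is what makes the $L^2$ estimate applicable without an $\epsilon$-loss.
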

\begin{proof} The strategy of this proof is the same as \cite{cordoba2004almost}.
For brevity of notation, we shall in this proof  write 
\[ (\indicator_\init,\indicator_\md,\indicator_\out) 
:= (\indicator_{A_{\init}^{\mycurve}},\indicator_{A_{\md}^{\mycurve}},\indicator_{A_{\out}^{\mycurve}}).\]

As in Proposition \ref{prop-evo-of-SF}, the term $\iint_{\Real^2,t} \del_t \phi \theta $ brings out the time derivative of the $C^2$ boundary curve: since the set $A_{\md}$ has measure $O(\delta)$, 
we see that if $\mycurve^\delta  = \mycurve + \delta \nrml $ parameterises the boundary of $A_{\init}$,

\begin{align}
    \iint_{\Real^2 \times [0,\infty]} \del_t \phi \theta  \dd{x} \dd{t} 
    &= \iint_{\{ x\in\mathbb R^2 , t\ge 0 : x \in A_{\init}(t) \}}  \del_t \phi \dd{x}\dd{t} + O(\delta)
    \\
    &= \int_0^\infty \int_{\mathbb T} \del_t (\mycurve^\delta) \nu^3 \phi(\mycurve^\delta,t) \dd{s}\dd{t} + O(\delta).
\end{align}
Here, $\nu^3$ is the third component of the outward normal $\nu$'s  as a vector in $(x^1,x^2,t)$-space, and
\begin{align}
    \del_t (\mycurve^\delta) \nu^3 = \del_t (\mycurve^\delta) \cdot \del_s ((\mycurve^\delta)^\perp) 
    & 
    = (L-\delta L \kappa ) \del_t (\mycurve + \delta N) \cdot  \del_s\mycurve^\perp    \\
    &
    = L \partial_t \mycurve \cdot \nrml + O(\delta), 
\end{align} 
where we used our definition of a compatible curve. Hence, as $\phi\in C^1$ and $\dd{l} = L\dd{s}$ for the uniform speed parameterised curve $\mycurve$, writing $\del A$ for the curve parameterised by $\mycurve$, we have
\[\iint_{\Real^2 \times [0,\infty]} \del_t \phi \theta  \dd{x} \dd{t} = -\int_0^\infty \int_{\del A}\phi(z) \del_{t}\mycurve\cdot\nrml \dd{l}\dd{t} + O(\delta). \] 
We now treat the second term $\iint_{\Real^2 \times [0,\infty]} \myvel\cdot\grad\phi \theta$ : observe the following decomposition, where we have written $u = \grad^\perp K * \theta = (\grad^\perp K) * \theta$ as a convolution of $\theta$ with the kernel $\nabla^\perp K$, and used the bilinearity of  $(f,g)\mapsto \int_{\Real^2} \grad^\perp K * f\cdot \grad\phi g$, and $\theta = \theta\cdot (\indicator_\init + \indicator_\md + \indicator_\out) = \theta \indicator_\md + \indicator_\init$:

\begin{equation}\label{generic-curve-decomposition}
    \begin{split}
				\int_{\Real^2}  u\cdot \grad \phi \theta 
         &
				= \int_{\Real^2}  \grad^\perp K*\indicator_\init \cdot \grad \phi \indicator_\init  
         + \int_{\Real^2}  \grad^\perp K*  \indicator_\init \cdot \grad \phi  \theta \indicator_\md \\
         &+ \int_{\Real^2}  \grad^\perp K* (\theta \indicator_\md) \cdot \grad \phi \theta \indicator_\md  
         + \int_{\Real^2}  \grad^\perp K* (\theta  \indicator_\md) \cdot \grad \phi \indicator_\init\\
         &=: (\text{EVO}) + (\text{A}) +(\text{B}) +(\text{C}) \\
         &= (\text{EVO}) + [(\text{A}) +(\text{B})] + [(\text{B}) +(\text{C})] - (\text{B}).
    \end{split}
\end{equation}
We will estimate separately each of the 4 terms in the last line of \eqref{generic-curve-decomposition}. Up to $O(\delta^{1-\alpha})$ errors, $(\text{EVO})$ will give us the evolution term, and the square-bracketed terms will use the $C^\eps$ regularity of $\theta = \theta\indicator_\md + \indicator_\init $ that is not available when estimating (A) or (C) alone.

    \subsection*{Control on $[(\text{A}) +(\text{B})]$} We proceed by splitting the kernel $K$,%
    \newcommand{\MYINT}[1]{\int_{\Real^2} {#1}  \cdot \grad\phi \indicator_\md }%
\begin{align*} 
[(\text{A})& +(\text{B})] 
= \MYINT{\grad^\perp K* \theta}  \\
&= \MYINT{(( \grad^\perp K)\indicator_{|\cdot|>\delta}) * \theta}  + \MYINT{((\grad^\perp K)\indicator_{|\cdot|<\delta})* \theta}  
\end{align*} 
We note the bounds (recall that $\int_{|y|<\delta} K(y)\dd{y} = 0$ )
\begin{align*}
    |((\grad^\perp K)\indicator_{|\cdot|>\delta}) * \theta(x) | &\lesssim \|\theta\|_{L^\infty} \int_{r=\delta}^\infty \frac{r\dd{r} }{r^{2+\alpha}} \lesssim_{\theta,\alpha} \delta^{-\alpha} , \text{ and}\\
    |((\grad^\perp K)\indicator_{|\cdot|<\delta}) * \theta(x) | 
    &= \abs{\int_{|y|<\delta} K(y) \theta(x-y) \dd{y} } \\
    &= \abs{\int_{|y|<\delta} K(y)|y|^{\alpha'} \br{\frac{\theta(x-y)-\theta(x)}{|y|^{\alpha'}}} \dd{y} } \\
    &\lesssim [\theta]_{C^{\alpha'}} \int_{r=0}^\delta \frac{r\dd{r}}{r^{2+\alpha-{\alpha'}} } \lesssim [\theta]_{C^{\alpha'}} \delta^{{\alpha'} - \alpha} = O(\delta^{-\alpha}),\label{two-terms}
\end{align*}
where in the second inequality, we used the regularity assumption $[\theta]_{C^{\alpha'}} \lesssim \delta^{-\alpha'}$ for some $\alpha' > \alpha$. Hence both terms  are integrals of $O(\delta^{-\alpha})$ functions that have support of size $O(\delta)$, due to the $\indicator_\md$ term. Therefore, $ (\text{A}) +(\text{B})  = O(\delta^{1-\alpha})$.
\subsection*{ Control on $[(\text{B}) +(\text{C})]$} 
Here, notice that 
\begin{align*} \int_{\Real^2} \grad^\perp K * f \cdot g 
&= -\int_{\Real^2} \del_{x^2} K * f  g^1 + \int_{\Real^2} \del_{x^1} K * f  g^2 \\
&=  -\int_{\Real^2} f \del_{x^2} K *  g^1 + \int_{\Real^2} f \del_{x^1} K *  g^2.
\end{align*}
The important feature is that the two kernels $  \del_{x^1} K(-x),\del_{x^2} K(-x)$ have the same $-2-\alpha$ homogeneity as $\grad^\perp K$, and have mean zero on the unit sphere. Hence, with $f=\theta \indicator_{\md}$ and $g=\grad\phi \theta\in C^\eps$, we can repeat the proof as for $[(\text{A})+(\text{B})]$, obtaining the same $O(\delta^{1-\alpha})$ estimate.
\subsection*{ Control on $(\text{B})$} Writing $R = \nabla \Lambda^{-1}$ for the vector of Riesz transforms (see for instance \cite{elias1970singular}), we have $\nabla^\perp K *f= R^\perp \Lambda^{\alpha}f $, so that
    \begin{equation*}
        \begin{split}
             |(\text{B})| 
             &= \left|\int_{\Real^2}R^\perp \hlapl^{\alpha/2}  (\theta \indicator_{\md}) \cdot \hlapl^{\alpha/2}(\grad \phi  \theta \indicator_\md) \right| \\
             &\lesssim \norm{ R^\perp \hlapl^{\alpha/2} (\theta \indicator_\md)  }_{L^2} 
             \norm{ \hlapl^{\alpha/2}( \grad \phi  \theta \indicator_\md) }_{L^2} \\
             &\lesssim \norm{ \hlapl^{\alpha/2} (\theta \indicator_\md)  }_{L^2} 
             \norm{ \hlapl^{\alpha/2}( \grad \phi  \theta \indicator_\md) }_{L^2},
        \end{split}
    \end{equation*}
    where the last line uses the boundedness of $R^\perp : L^2\to L^2$. To bound these terms, we will use the following fractional Leibniz rule  of e.g. \cite{kenig1993well-posedness},
    \begin{align}
        \|\Lambda^{s} (fg)\|_{L^2} \le \|\Lambda^{s} f\|_{L^\infty} \|g\|_{L^2} + \|f\|_{L^\infty}\|\Lambda^{s} g\|_{L^2} \label{fractional-leibniz} \quad s\in (0,1),
    \end{align} 
and the following easy estimate, valid for $s+\epsilon \le 1$ that comes from bounding the following two terms separately (similarly to the earlier part of this proof) $\Lambda^{s} f(x)= \left(\int_{|x-y|\le \delta} +\int_{|x-y|>\delta} \right)\frac{f(x)-f(y)}{|x-y|^{2+s}} \dd{y}$,
\[ \|\Lambda^{s} f(x)\|_{L^\infty} \lesssim  \delta^{\epsilon} [f]_{C^{s+\epsilon}}  + \delta^{-s} \|f\|_{L^\infty}. \]
By interpolation, since $|\nabla \theta|\lesssim \frac1{\delta}$ (Definition \ref{defn-ASF}), $[f]_{C^{s+\epsilon}}\lesssim \delta^{-s-\epsilon}$ for $s+\epsilon\le 1$. Setting $s = \alpha/2$, $g = \mathbf 1_{\md}$  and $f = \theta$ or $\nabla\phi \theta$, we obtain
    \[ \|\hlapl^{\alpha/2} (\theta \indicator_{\md} ) \|_{L^2} \lesssim \delta^{(1-\alpha)/2}, \qquad \qquad
			            \norm{ \hlapl^{\alpha/2}( \grad \phi  \theta \indicator_\md) }_{L^2}
              \lesssim 
             \delta^{(1-\alpha)/2}. 
							\]		
    Together, these inequalities prove that $|(\text B)|\lesssim \delta^{1-\alpha}$. 
\subsection*{Evolution term in $(\text{EVO})$}
  By following the proof of the analogous sharp front result \eqref{prop-evo-of-SF} and using $\mycurve^\delta  = \mycurve + \delta \nrml $ again,\begin{align*}
I_{\init} &=  \int_{\mathbb T} \phi(\mycurve^\delta) \left( \int_{\mathbb T} K(\mycurve^\delta-\mycurve^\delta_*) \del_s \mycurve^\delta_*\cdot\del_s (\mycurve^\delta_*)^\perp   \dd{s}_*  \right)
  \dd{s} \\
     &= \int_{s\in I} \phi(\mycurve) \left( \int_{\mathbb T} K(\mycurve-\mycurve_*) (\del_s \mycurve_* - \del_s \mycurve )  \dd{s}_*  \right)
    \cdot \nrml \dd{s}+ O(\delta).
\end{align*}
This last line follows from a simple application of Mean Value Theorem, completing the proof.
\end{proof}

In the above proof, we relied on a fractional Leibniz rule \eqref{fractional-leibniz}. The following lemma can serve as a weak replacement:

\begin{lem}[H\"older-Indicator Leibniz Rule]\label{lem-cheap-leibniz}
Let $0<s<1/2$, $s<s'$. Suppose $A\subseteq \mathbb R^d$ is a bounded set 
with $C^2$ boundary and let $f \in C^{s'}(A)$.  Then the extension of $f$ by zero, $ f \indicator_A $ belongs to  $H^{s}(\Real^d)$ with 
		\[\| \hlapl^{s} (f\indicator_A) \|^2_{L^2(\Real^d)} \lesssim_{d,s} \|f\|^2_{L^\infty(A)} |A|^{1-2s	} + [f]^2_{C^{s'}(A)}|A|^{1-\frac{2(s-s')}{d}}. \]
 
\end{lem}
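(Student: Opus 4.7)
The plan is to mimic the proof of Lemma \ref{lem-fractional-sobolev-norm-of-indicator} and bound the Gagliardo seminorm
\[ [f\indicator_A]_{H^s}^2 = \int_{\Real^d}\int_{\Real^d} \frac{|f(x)\indicator_A(x) - f(y)\indicator_A(y)|^2}{|x-y|^{d+2s}} \dd{y}\dd{x} \]
directly, since on $\Real^d$ this is comparable to $\|\hlapl^s(f\indicator_A)\|_{L^2}^2$ up to a constant depending only on $d,s$. The key observation is that $f\indicator_A$ vanishes outside $A$, so I would split the double integral into three regions: $A\times A$, the symmetric pair $(A\times A^c)\cup(A^c\times A)$, and $A^c\times A^c$. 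The third region contributes nothing.

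For the mixed region, when $x\in A$ and $y\in A^c$ the integrand is $|f(x)|^2/|x-y|^{d+2s}\le \|f\|_{L^\infty(A)}^2 /|x-y|^{d+2s}$, which reduces exactly to the quantity estimated in Lemma \ref{lem-fractional-sobolev-norm-of-indicator}. That lemma's layer-cake/rearrangement argument then delivers the first term in the bound, $\|f\|_{L^\infty(A)}^2|A|^{1-2s}$, with no additional work required.

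The new ingredient is controlling the contribution from $A\times A$. Here the H\"older hypothesis gives $|f(x)-f(y)|\le [f]_{C^{s'}(A)}|x-y|^{s'}$, so this piece is bounded by
\[ [f]_{C^{s'}(A)}^2 \int_A \int_A |x-y|^{2s'-d-2s}\dd{y}\dd{x}, \]
and since $s<s'$ the exponent $2(s'-s)-d$ makes $r\mapsto r^{2s'-d-2s}$ a radially decreasing locally integrable function. I would invoke the standard symmetric-decreasing rearrangement (or, equivalently, the observation that the integrand decreases radially from $x$) to replace the inner integral over $A$ by the integral over the ball $B(x,R)$ of equal volume $|A|$, so that $R\asymp_d |A|^{1/d}$. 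A direct polar-coordinate computation then gives
\[ \int_{B(x,R)} |x-y|^{2(s'-s)-d}\dd{y} \lesssim_{d,s,s'} R^{2(s'-s)} \lesssim |A|^{\frac{2(s'-s)}{d}}, \]
and integrating over $x\in A$ yields the desired $[f]_{C^{s'}(A)}^2 |A|^{1+\frac{2(s'-s)}{d}}=[f]_{C^{s'}(A)}^2|A|^{1-\frac{2(s-s')}{d}}$ contribution.

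The main obstacle is the $A\times A$ estimate: a naive bound using $\operatorname{diam}(A)$ inside the inner integral would give $|A|\cdot(\operatorname{diam} A)^{2(s'-s)}$, which is strictly weaker than the claimed volume-only bound whenever $A$ is far from being a ball. The rearrangement step is what converts a diameter-based bound into a sharp volume-based one, and checking that the integral converges (using $s<s'$) is the only analytic subtlety; everything else is a book-keeping exercise analogous to Lemma \ref{lem-fractional-sobolev-norm-of-indicator}.
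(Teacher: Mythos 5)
Your argument is correct and uses the same decomposition of $\Real^d\times\Real^d$ as the paper's: the region $A^c\times A^c$ vanishes, the mixed region is bounded by $\|f\|_{L^\infty}^2$ times the indicator integral of Lemma \ref{lem-fractional-sobolev-norm-of-indicator}, and the genuinely new work is in $A\times A$. The only real difference is how you handle that last piece. The paper applies a layer-cake decomposition to the double integral, obtaining
\[
I_A\lesssim [f]_{C^{s'}}^2 \int_0^\infty \frac{\tilde m(\tau)}{\tau^{d+2(s-s')+1}}\dd{\tau},\qquad \tilde m(\tau)=\mu\{x,y\in A:|x-y|<\tau\}\lesssim\min\bigl(|A|^2,|A|\tau^d\bigr),
\]
and splits the $\tau$-integral at $\tau=|A|^{1/d}$. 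You instead apply the \Holder bound first, freeze $x$, and dominate $\int_A|x-y|^{2(s'-s)-d}\dd{y}$ by the integral over the ball of equal volume (the bathtub/rearrangement principle), then integrate in $x$. These are close cousins: the bound $\tilde m(\tau)\lesssim|A|\tau^d$ is precisely the statement $|\{y\in A:|x-y|<\tau\}|\le|B(0,\tau)|$ that underlies your rearrangement step, and both arguments implicitly use the range condition $0<s'-s<d/2$ (for you this is what makes $|x-y|^{2(s'-s)-d}$ radially decreasing and integrable at the diagonal; for the paper it is what makes both halves of the $\tau$-integral converge). This condition is automatic in the paper's application since $s'\le1$ and $d=2$, but is worth flagging if the lemma is read in isolation. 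Your version is marginally more elementary in avoiding the layer-cake identity; the paper's is marginally more symmetric in $(x,y)$. The final estimate is identical, so the proof stands.
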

\begin{proof}
We again bound the Gagliardo seminorm $[f\indicator_A]_{H^s}^2$. We have
\begin{align}
    [f\indicator_A]_{H^s}^2 &= \left( 2\int_{A^c}\int_{ A}+\int_{A}\int_{ A}  \right) \frac{|(f\indicator_A)(x) - (f\indicator_A)(y)|^2}{|x-y|^{d+2s}}  \dd{x} \dd{y} 
    \\
    &=  2\int_{A^c}\int_{ A} \frac{|f(x)|^2}{|x-y|^{d+2s}}  \dd{x} \dd{y} 
     + \int_{A}\int_{ A} \frac{|f(x) - f(y)|^2}{|x-y|^{d+2s}}  \dd{x} \dd{y}. 
\end{align}
the proof of Lemma \ref{lem-fractional-sobolev-norm-of-indicator} (when only one of $x,y$ is in $A$) yields
\[  \int_{A^c}\int_{ A} \frac{|f(x)|^2}{|x-y|^{d+2s}}  \dd{x} \dd{y} \lesssim_{d,s} \|f\|^2_{L^\infty} |A|^{1-2s} ,\]
but unlike Lemma \ref{lem-fractional-sobolev-norm-of-indicator}, we do have contributions when $(x,y) \in A\times A$. That is, we need to obtain a bound on the  integral
\begin{equation}
    \begin{split}
         I_{A} &:= \int_{A}\int_{ A} \frac{|f(x) - f(y)|^2}{|x-y|^{d+2s}}  \dd{x} \dd{y}.
    \end{split}
\end{equation} 
Using a layer cake decomposition again with \[\tilde m(t) := \mu(x,y\in A, y\in A : |x-y|<t),\]
we can estimate $I_A$ as follows:
\begin{equation}
    \begin{split}
        I_A 
        &= \int_0^\infty \mu\left[x\in A,y\in A : \frac{|f(x) - f(y)|^2}{|x-y|^{d+2s}} > t\right]  \dd{t}\\
        &\le \int_0^\infty \mu\left[x\in A,y\in A :\frac{[f]^2_{C^{s'}}}{|x-y|^{d+2(s-s'))}} > t \right]  \dd{t}   \\
        &= \int_0^\infty \tilde m\! \br{([f]_{C^{s'}}^2/t)^{\frac1{d+2(s-{s'})}} }  \dd{t}
				\lesssim_{d,s,s'}  [f]_{C^{s'}}^2 \int_0^\infty \frac{\tilde m(\tau) }{\tau^{d+2(s-{s'})+1} }  \dd{\tau} ,
    \end{split}
\end{equation}
where in the last line we have changed variables $\tau = ([f]^2_{C^{s'}} / t)^{\frac1{d+2(s-{s'})}}$%
 and ignored constants. Observe  that by reasoning similarly to  Lemma \ref{lem-fractional-sobolev-norm-of-indicator}, 
 \[\tilde m(\tau) \lesssim_d \min( |A|^2, |A| \tau^d).\] 
 Hence, the optimal bound is obtained by splitting the integration region $\tau > 0$ into the sets $\tau \in[0,|A|^{1/d}]$ and  $\tau \in [|A|^{1/d},\infty]$, which yields 
\begin{equation}
	\begin{split}
		I_A 
		&\lesssim  [f]^2_{C^{s'}}  |A| \int_0^{|A|^{1/d}} \frac{\dd{\tau}}{\tau^{2(s-{s'})+1}} 
		+ [f]^2_{C^{s'}} |A|^2 \int_{|A|^{1/d}}^\infty \frac{\dd{\tau}}{\tau^{d+2(s-{s'})+1}}  \\
		&= [f]^2_{C^{s'}} ( |A|^{1-\frac{2(s-s')}{d}} + |A|^{2-\frac{d+2(s-s')}{d}})\lesssim [f]^2_{C^{s'}}  |A|^{1-\frac{2(s-s')}{d}},
	\end{split}
\end{equation} 
so long as $s'> s$. As there is no contribution to $[f\indicator_A]_{H^s}^2$ when $(x,y)\in A^c \times A^c$, this concludes the proof.
\end{proof}%
Using Lemma \ref{lem-cheap-leibniz} instead of the fractional Leibniz rule \eqref{fractional-leibniz}, we obtain the following weaker result.
\begin{thm}\label{prop-comp-curve-evo-weaker}
Suppose that $\theta$ is an \ASF solution, and $\mycurve$ is a compatible curve as defined in \eqref{defn-comp-curve}. Then $\mycurve$  satisfies   the sharp front equation (in the weak sense) up to $O(\delta^{1-\alpha'})$ errors for any $\alpha'>\alpha$,
\begin{equation*}
    \del_t \mycurve \cdot \nrml = \left( -\int_{\mathbb T} K(\mycurve-\mycurve_*) (\del_s \mycurve_* - \del_s \mycurve )  \dd{s}_*  \right)
\cdot \nrml+ O(\delta^{1-\alpha'}).
\end{equation*}
\end{thm}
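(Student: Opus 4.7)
The plan is to reproduce the proof of Theorem \ref{prop-comp-curve-evo} essentially verbatim, changing only the step where the fractional Leibniz rule \eqref{fractional-leibniz} was invoked (the control of the bilinear term $(\mathrm{B})$); Lemma \ref{lem-cheap-leibniz} will serve as its replacement, at the cost of an arbitrarily small loss $\alpha'>\alpha$ in the exponent. The decomposition \eqref{generic-curve-decomposition} of $\int \myvel\cdot\nabla\phi\,\theta$ is unchanged, and the estimates for $[(\mathrm{A})+(\mathrm{B})]$, $[(\mathrm{B})+(\mathrm{C})]$, and the evolution term $(\mathrm{EVO})$ go through without modification, yielding errors that are $O(\delta^{1-\alpha})\subset O(\delta^{1-\alpha'})$.

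First I would fix $\alpha'>\alpha$ and choose an auxiliary exponent $s'$ with $\alpha/2<s'\le \alpha'-\alpha/2$; this interval is non-empty precisely because $\alpha'>\alpha$. As before, I write $\nabla^\perp K\ast f = R^\perp \Lambda^{\alpha} f$, use the $L^2$-boundedness of the Riesz transform, and apply Cauchy--Schwarz to obtain
\[
|(\mathrm{B})| \lesssim \|\Lambda^{\alpha/2}(\theta\indicator_\md)\|_{L^2}\,\|\Lambda^{\alpha/2}(\nabla\phi\,\theta\indicator_\md)\|_{L^2}.
\]
To estimate each factor, I invoke Lemma \ref{lem-cheap-leibniz} with $d=2$, $s=\alpha/2$, and $A=A_\md(t)$, which has $|A_\md|\lesssim\delta$. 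For $f=\theta$, the ASF growth condition \eqref{eqn-growth-condition} combined with $\|\theta\|_{L^\infty}\lesssim 1$ gives, by a standard interpolation $|\theta(x)-\theta(y)|\le \min(1,|x-y|/\delta) \le |x-y|^{s'}\delta^{-s'}$, the bound $[\theta]_{C^{s'}}\lesssim \delta^{-s'}$; the same bound applies to $\nabla\phi\,\theta$ since $\phi\in C_c^\infty$. Substituting into Lemma \ref{lem-cheap-leibniz}:
\[
\|\Lambda^{\alpha/2}(\theta\indicator_\md)\|_{L^2}^2 \lesssim \delta^{1-\alpha} + \delta^{-2s'}\,\delta^{1-\alpha/2+s'} = \delta^{1-\alpha} + \delta^{1-\alpha/2-s'},
\]
and by the choice of $s'$ both terms are $\lesssim \delta^{1-\alpha'}$. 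The identical bound holds for $\nabla\phi\,\theta\indicator_\md$.

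Multiplying the two factors gives $|(\mathrm{B})|\lesssim \delta^{1-\alpha'}$. Combining with the unchanged $O(\delta^{1-\alpha})=O(\delta^{1-\alpha'})$ bounds on $[(\mathrm{A})+(\mathrm{B})]$ and $[(\mathrm{B})+(\mathrm{C})]$, subtracting $(\mathrm{B})$, and using the Mean Value Theorem step of Theorem \ref{prop-comp-curve-evo} for $(\mathrm{EVO})$ yields
\[
\partial_t \mycurve\cdot \nrml = -\int_{\mathbb T} K(\mycurve-\mycurve_*)(\partial_s \mycurve_* - \partial_s \mycurve)\,\mathrm{d}s_*\cdot \nrml + O(\delta^{1-\alpha'}),
\]
in the weak sense, which is the claim. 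I do not anticipate any genuine obstacle: the whole design of Lemma \ref{lem-cheap-leibniz} is to fit this use case, and the only technical point is the bookkeeping on $s'$ that converts the exponent $\alpha'-\alpha$ of slack in the Hölder regularity into the same slack in the final error. If one wished to avoid fixing $\alpha'$ a priori, one could optimise $s'$ as a function of $\delta$, but the stated formulation is cleaner.
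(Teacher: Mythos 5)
Your proposal is correct and is precisely the argument the paper intends: the text preceding the theorem says the result follows by substituting Lemma~\ref{lem-cheap-leibniz} for the fractional Leibniz rule \eqref{fractional-leibniz} in the estimate of the term $(\mathrm B)$, and your choice $s'\in(\alpha/2,\,\alpha'-\alpha/2]$ together with $[\theta]_{C^{s'}}\lesssim\delta^{-s'}$ and $|A_{\md}|\lesssim\delta$ produces exactly the claimed bound $\|\Lambda^{\alpha/2}(\theta\indicator_{\md})\|_{L^2}^2\lesssim\delta^{1-\alpha}+\delta^{1-\alpha/2-s'}\lesssim\delta^{1-\alpha'}$. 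The remaining terms $[(\mathrm A)+(\mathrm B)]$, $[(\mathrm B)+(\mathrm C)]$, and $(\mathrm{EVO})$ are indeed untouched, so the overall error is $O(\delta^{1-\alpha'})$ as required.
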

\section{The spine of an almost-sharp front}\label{section-spine}
Here we introduce the concept of the spine,  first considered by Fefferman, Luli and Rodrigo in \cite{fefferman2012spine} for SQG in the periodic setting (our contours are however not graphs). 
To simplify the following calculations, assume without loss of generality  $\Omega$ is given by $\eqref{eqn-omega-pmonehalf}$.

\begin{defn}  \label{defn-spine}Suppose an almost-sharp  front has tubular neighbourhood coordinates  (see Section \ref{tub-coords}) $(s,\xi)$ for the transition region,  induced by the compatible curve $\mycurve$. 

We say that the curve $\myspine$ is a spine for the almost-sharp front if $\myspine$ is also a compatible curve, and there is a $C^2 $ function of the uniform speed parameter $f=f(s)$ taking values in $[- C^{\mycurve},C^{\mycurve}]$ such that 
\begin{equation}
	\int_{-C^{\mycurve}}^{C^{\mycurve}} (\xi_* - f(s_*))\del_\xi\Omega_* \dd{\xi}_* = 0,
\end{equation} 
or equivalently by the choice $\Omega|_{\xi=\pm C^{\mycurve} } = \pm 1/2    $, $f(s_*)= - \int_{-C^{\mycurve}}^{C^{\mycurve}}  \Omega_* \dd{\xi}_*$, and the corresponding spine is the curve $\myspine$ given in $(s,\xi)$ coordinates as $\xi = f(s)$, that is:
\[ \myspine(s) = \mycurve(s) + \delta f(s)\nrml(s). \]
\end{defn}
The function $f$ acts as a correction, so that for example, the base curve is also a spine if $f=0$.

An immediate consequence of Definition \ref{defn-spine} by integrating by parts is the following cancellation property     for any constant $C\ge  \|f\|_{L^\infty}$,
\begin{align}\label{eqn-spine-cancellation} 
 0 = \int_{\xi_*=f-C - C^{\mycurve} }^{f+C+C^{\mycurve}} \Omega_* \dd{\xi}_*, 
 \end{align}  
 where $\Omega$ is continuously extended to be constant on $|\xi|\ge  C^\mycurve    $ past the geometrically significant range that defines the tubular neighbourhood. Indeed, for $D\gg 1$, 
 \[ \int_{f-D}^{f+D} \!\!\Omega_* 
\dd{\xi_*} 
= \underbrace{
    \xi_* \Omega_* \Big|_{f-D}^{f+D}
    }_{
        =-f}   
    - \underbrace{
    \int_{f-D}^{f+D} \!\! \xi_* \del_\xi \Omega_* \dd{\xi_*}
    }_{
        = \int_{-C^{\mycurve}}^{C^{\mycurve}} \xi_* \partial_\xi \Omega_* \dd{\xi_*}
        } 
= \int_{-C^{\mycurve}}^{C^{\mycurve}} (\xi_* - f)\del_\xi\Omega_* \dd{\xi_*} = 0. \]
We also have the following property. 

\begin{prop}[Spine approximation property]\label{spine-property}
Let $\theta$ be a $C^2$ almost-sharp front, and let $\myspine$ be the spine curve defined by the above \eqref{defn-spine}. Then for any $\Gamma = \Gamma(x) \in C^2_c(\Real^2)$, as $\delta\to0$,
\begin{align}\label{eqn-spine-property}\int_{x\in\Real^2} \Gamma \grad^\perp\theta \dd{x} = -\int_{\mathbb T} \Gamma(\myspine_*) \del_s \myspine_*  \dd{s}_* + O(\|\Gamma\|_{C^2}\delta^{2}).\end{align}
\end{prop}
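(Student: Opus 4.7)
The plan is to change to tubular coordinates $(s,\xi)$ from Section \ref{tub-coords} centred on $\mycurve$, use the decomposition $\grad^\perp\theta = \nrml\,\Omega_s/L_1 - \tgt\,\Omega_\xi/\delta$ together with $\dd{x} = L_1\delta\dd{s}\dd{\xi}$, and Taylor expand every smooth factor around the spine $\myspine(s)=\mycurve+\delta f\nrml$. The two cancellation identities provided by Definition \ref{defn-spine}, namely
\[
\int_{-1}^{1}(\xi-f)\,\del_\xi\Omega\dd{\xi}=0 \qquad\text{and equivalently}\qquad \int_{-1}^{1}\Omega\dd{\xi}=-f,
\]
will kill the otherwise-surviving $O(\delta)$ contributions and deliver the improved $O(\delta^2)$ remainder.

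Writing the left-hand side of \eqref{eqn-spine-property} as $I+J$ with
\[
I:=\int_{\mathbb T}\!\int_{-1}^{1}\Gamma(x)\,\delta\nrml\,\Omega_s\dd{\xi}\dd{s},\qquad J:=-\int_{\mathbb T}\!\int_{-1}^{1}\Gamma(x)\,L_1\tgt\,\Omega_\xi\dd{\xi}\dd{s},
\]
I handle $J$ first by substituting $\eta=\xi-f$, so that $x=\myspine+\delta\eta\nrml$, and expanding $\Gamma(x)=\Gamma(\myspine)+\delta\eta\,\grad\Gamma(\myspine)\cdot\nrml + O(\delta^2\|\Gamma\|_{C^2})$ together with $L_1 = L(1-\delta\kappa f)-\delta L\kappa\eta$. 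Multiplying by $\tgt\Omega_\xi$ and integrating in $\xi$, the two coefficients linear in $\eta$ are annihilated by the first spine identity, while $\int\Omega_\xi\dd{\xi}=1$ yields the leading piece, producing
\[
J = -\int_{\mathbb T}\Gamma(\myspine)L\tgt\dd{s} + \delta\int_{\mathbb T}\Gamma(\myspine)L\kappa f\,\tgt\dd{s} + O(\delta^2\|\Gamma\|_{C^2}).
\]

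For $I$, whose prefactor is already $O(\delta)$, I integrate $\Omega_s$ by parts in the periodic variable $s$. Using $\del_s x = L_1\tgt$ and $\del_s\nrml = -L\kappa\tgt$, the integrand becomes $-\delta[L_1(\grad\Gamma(x)\cdot\tgt)\nrml - L\kappa\Gamma(x)\tgt]\Omega$; expanding each factor to leading order and invoking the second spine identity $\int\Omega\dd{\xi}=-f$ gives
\[
I = \delta\int_{\mathbb T}Lf(\grad\Gamma(\myspine)\cdot\tgt)\nrml\dd{s} - \delta\int_{\mathbb T}Lf\kappa\,\Gamma(\myspine)\tgt\dd{s} + O(\delta^2\|\Gamma\|_{C^2}).
\]
The $\delta\int L\kappa f\Gamma\tgt\dd{s}$ contributions in $I$ and $J$ cancel pairwise, leaving $I+J = -\int\Gamma(\myspine)L\tgt\dd{s} + \delta\int Lf(\grad\Gamma(\myspine)\cdot\tgt)\nrml\dd{s} + O(\delta^2\|\Gamma\|_{C^2})$.

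To close, I compare with the target. Since $\del_s\myspine = L\tgt + \delta f'\nrml - \delta fL\kappa\tgt$, the right-hand side of \eqref{eqn-spine-property} splits into three contributions; a single integration by parts in $s$ on the $-\delta\int\Gamma(\myspine)f'\nrml\dd{s}$ piece, using $\del_s[\Gamma(\myspine)\nrml]=L(\grad\Gamma(\myspine)\cdot\tgt)\nrml - L\kappa\Gamma(\myspine)\tgt + O(\delta)$, recovers exactly the surviving $\delta$-terms in $I+J$, with $O(\delta^2\|\Gamma\|_{C^2})$ error throughout. The main obstacle is the pairing: each of $I$ and $J$ individually produces a genuine $O(\delta)$ correction, and only by invoking \emph{both} spine identities simultaneously do the curvature terms cancel and the remaining derivative term reassemble into $-\delta\int\Gamma(\myspine)f'\nrml\dd{s}$ modulo $O(\delta^2)$; without the spine condition the error would merely be $O(\delta)$, consistent with the generic compatible-curve rate of Theorem \ref{prop-comp-curve-evo}.
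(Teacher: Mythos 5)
Your proof is correct, and it rests on the same two pillars as the paper's: Taylor expansion of $\Gamma$ around the spine $\myspine$ and the spine cancellation identities from Definition \ref{defn-spine}. The mechanics differ in two minor ways. First, the paper never integrates by parts: it Taylor-expands $\Gamma(x_*)$ in \eqref{LHS-spine}, groups the result into a single square bracket, and shows that bracket vanishes identically using the three identities $\int_{-1}^1(\xi_*-f_*)\del_\xi\Omega_*\dd{\xi_*}=0$, $f'_*+\int_{-1}^1\del_s\Omega_*\dd{\xi_*}=0$ (the differentiated spine relation), and the explicit formula for $\del_s\myspine_*$. You instead integrate by parts in $s$ in the $I$-integral (converting $\Omega_s$ to $\Omega$ so you can apply $\int\Omega\dd{\xi}=-f$), and then integrate by parts a second time when matching the $-\delta\int\Gamma(\myspine)f'\nrml\dd{s}$ piece of $-\int\Gamma(\myspine)\del_s\myspine\dd{s}$ against your surviving terms — in effect re-deriving the identity $f'=-\int\del_s\Omega$ by hand rather than invoking it directly. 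Second, you split the left-hand side into $I$ and $J$ and track a cancelling pair of curvature terms $\pm\delta\int L\kappa f\Gamma(\myspine)\tgt\dd{s}$ that the paper never sees because it keeps everything in one bracket. Both observations are sound and your error accounting is consistent at $O(\delta^2\|\Gamma\|_{C^2})$ throughout; the paper's route is simply tighter, collapsing your two integrations by parts into a single algebraic substitution.
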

We keep the $\|\Gamma\|_{C^2}$ dependence  in \eqref{eqn-spine-property} because we will require the use of test functions with $\|\Gamma\|_{C^2}$ that degenerate as $\delta\to 0$.

\begin{proof}
Write $\tilde\Gamma = \Gamma(x(s,\xi))$, with $x=\mycurve + \delta \xi \nrml$. We are looking for the following expansion 
\begin{equation}
\label{wanted-spine-expansion}
     \iint  \br{\delta \del_s \Omega_* \nrml_* - L(1-\delta\kappa_*\xi_*)	\del_\xi\Omega_* \tgt_*} \tilde\Gamma_* \dd{s}_* \dd{\xi}_*
     + \int_{\mathbb T} \tilde\Gamma_*\del_s\myspine_*	 \dd{s}_*
     \overset{?}{=} O(\delta^2).
\end{equation}

We Taylor expand around $\myspine_*$ in the direction of $\nrml_*$ to obtain
\begin{equation*}
	\begin{split}
		\tilde\Gamma_* = \Gamma(x_*) &= \Gamma(\myspine_*) + \grad\Gamma(\myspine_*)\cdot (x_* - \myspine_*) + O(|x_*-\myspine_*|^2) \\
		&= \Gamma(\myspine_*) + \delta \grad\Gamma(\myspine_*) \cdot  \nrml_* (\xi_* - f_*) + O(\delta^2).
	\end{split}
\end{equation*} 
Plugging into the left hand side (LHS) of \eqref{wanted-spine-expansion} we have
 \begin{equation}\begin{split}\label{LHS-spine}
&\text{LHS}\\ 
&= \int_{\mathbb T} \Bigg[  \Gamma(\myspine_*) \br{\del_s\myspine_*	 -  \tgt_* \int_{-1}^1L(1-\delta\kappa_*\xi_*) \del_\xi\Omega_* \dd{\xi}_* + \delta \nrml_*\int_{-1}^1  \del_s \Omega_*  \dd{\xi}_*} \\
&  - L \delta \grad\Gamma(\myspine_*) \cdot  \nrml_* \int_{-1}^1 (\xi_* - f_*)   \del_\xi\Omega_* \tgt_*  \  \dd{\xi}_* \Bigg] \dd{s}_* + O(\delta^2) .
\end{split}
\end{equation}
The following identities (immediate from the definitions of $\myspine$ and $f$), 
\begin{align*}
0 &= f'_* + \int_{-1}^1  \del_s \Omega_*  \dd{\xi}, \\
0&= \int_{-1}^1 (\xi_* - f_*)\del_\xi\Omega_* \dd{\xi}, \text{ and} \\
\del_s\myspine_* &=  L(1 - \delta\kappa_* f_*) \tgt_*  + \delta f'_*\nrml_* \\
&=  L(1 - \delta\kappa_* f_*) \tgt_* \int_{-1}^1 \del_\xi \Omega_* \dd{\xi} + \delta f_*'\nrml_*,
\end{align*}
show that the right-hand side of \eqref{LHS-spine} is of order $\delta^2$, as the terms in the square brackets vanish.
\end{proof}

\subsection{Evolution of a spine}
Theorem \ref{prop-comp-curve-evo} showed that any compatible curve evolves by the sharp front equation \eqref{eqn-generic-cde} up to an error of order $O(\delta^{1-\alpha})$. However, for the spine, we will  improve this to  $O(\delta^{2-\alpha})$.

\begin{thm}
Suppose that $\theta$ is an ASF solution. Then the spine curve $\myspine$  in Definition \ref{defn-spine} evolves (in the weak sense) according to the sharp front equation up to $O(\delta^{2-\alpha})$ error. That is,
\begin{equation}
    \del_t \myspine \cdot \nrml  = \left(- \int_{s_*\in I} K(\myspine-\myspine_*) (\del_s \myspine_* - \del_s \myspine )  \dd{s}_*  \right)
\cdot \nrml + O(\delta^{2-\alpha}) \label{eqn-spine-evolution}.
\end{equation}
\end{thm}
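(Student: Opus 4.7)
My plan is to adapt the proof of Theorem \ref{prop-comp-curve-evo}, upgrading each error estimate by one power of $\delta$ through systematic use of the spine approximation property (Proposition \ref{spine-property}) and its underlying slice-wise cancellation. Concretely, I would introduce the auxiliary sharp-front indicator $\tilde\theta_\myspine$ of the region ``inside'' $\myspine$ and its associated velocity $\tilde u_\myspine := \grad^\perp K * \tilde\theta_\myspine$. Starting from the weak equation $\iint \theta\del_t\phi + \theta u\cdot\grad\phi\,dx\,dt = 0$, the theorem reduces to the two claims
\begin{align*}
\iint \theta\del_t\phi\,dx\,dt &= -\int_0^T\!\!\int_\mathbb{T}\phi(\myspine)\del_t\myspine\cdot\nrml_\myspine\,dl\,dt + O(\delta^{2}), \\
\iint \theta u\cdot\grad\phi\,dx\,dt &= \int_0^T\!\!\int_\mathbb{T}\phi(\myspine)\mathcal I(\myspine)\cdot\nrml_\myspine\,dl\,dt + O(\delta^{2-\alpha}),
\end{align*}
whose combination yields \eqref{eqn-spine-evolution} tested against $\phi$. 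Noting $\iint\tilde\theta_\myspine\tilde u_\myspine\cdot\grad\phi\,dx\,dt$ already produces the desired right-hand side of the second claim (the sharp-front advection, by the reasoning behind Proposition \ref{prop-evo-of-SF}), everything reduces to showing $\iint(\theta u - \tilde\theta_\myspine \tilde u_\myspine)\cdot\grad\phi\,dx\,dt = O(\delta^{2-\alpha})$.

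For the time claim, I would write $\theta = \tilde\theta_\myspine + (\theta - \tilde\theta_\myspine)$. The defining relation $f(s) = -\int\Omega\,d\xi$ combined with $\Omega|_{\xi = \pm 1}=\pm 1/2$ gives pointwise in $s$ the slice-mean cancellation $\int_{-1}^{1}(\theta - \tilde\theta_\myspine)\,d\xi = 0$. Taylor expanding $\del_t\phi(\mycurve + \delta\xi\nrml)$ around $\myspine$ in the normal direction (using $\myspine = \mycurve + \delta f\nrml$), the constant-in-$\xi$ term integrates against this vanishing slice-mean and the remainder is $O(\delta^{2})$, while the $\tilde\theta_\myspine$ piece produces $-\int\phi(\myspine)\del_t\myspine\cdot\nrml_\myspine\,dl\,dt$ by Reynolds transport exactly as for sharp fronts.

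For the advection claim, I split $\theta u - \tilde\theta_\myspine\tilde u_\myspine = (\theta-\tilde\theta_\myspine)u + \tilde\theta_\myspine(u - \tilde u_\myspine)$ and treat both via the double-integral representation, e.g.\ the first piece becomes $\iint \grad^\perp K(x-y)(\theta(y)-\tilde\theta_\myspine(y))\theta(x)\cdot\grad\phi(x)\,dy\,dx$. For $|x-y|>\delta$ I would Taylor expand the smooth (in $y$) integrand $\grad^\perp K(x-y)\theta(x)\cdot\grad\phi(x)$ around $\myspine$ in the normal direction slice by slice; the zeroth-order term vanishes by the slice-mean cancellation and the first-order term gains the extra factor of $\delta$, while the $O(|z|^{-2-\alpha})$ growth of $\grad^\perp K$ contributes the $\delta^{-\alpha}$, for a net $O(\delta^{2-\alpha})$. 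For $|x-y|<\delta$ I would use the kernel bound together with the Hölder regularity $[\theta]_{C^{\alpha'}}\lesssim \delta^{-\alpha'}$ as in the proof of Theorem \ref{prop-comp-curve-evo}, but now applied to the centred difference $\theta-\tilde\theta_\myspine$: the vanishing slice-mean of $\theta-\tilde\theta_\myspine$ converts the naive $O(\delta^{1-\alpha})$ estimate into $O(\delta^{2-\alpha})$. The second piece $\iint\tilde\theta_\myspine(u-\tilde u_\myspine)\cdot\grad\phi\,dx\,dt$ is handled symmetrically by swapping $x\leftrightarrow y$ inside the double integral, exploiting the oddness $\grad^\perp K(-z) = -\grad^\perp K(z)$ to reduce it to the first case.

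The main obstacle is making the slice-wise $\delta$-gain survive the $|z|^{-2-\alpha}$ singularity of $\grad^\perp K$ at small separations: the spine cancellation is a statement about smooth test functions, whereas $K$ concentrates precisely in the regime where smoothness breaks. The resolution is that the cancellation is local in $s$ and the centred function $\theta - \tilde\theta_\myspine$ has vanishing normal mean not just in aggregate but in every slice—a refinement of Lemma \ref{lem-cheap-leibniz} tracking this vanishing normal mean in its layer-cake argument is what extracts the extra $\delta$. Once this ingredient is in place, combining the near- and far-field estimates yields exactly $O(\delta^{2-\alpha})$: the $\alpha$-loss is unavoidable (it is intrinsic to the singular kernel), but the spine's first-moment cancellation buys back precisely the one additional power of $\delta$ that distinguishes the spine from a generic compatible curve.
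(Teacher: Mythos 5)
Your decomposition $\theta = \tilde\theta_{\myspine} + (\theta-\tilde\theta_{\myspine})$, where $\tilde\theta_{\myspine}$ is the sharp-front indicator of the spine region, is a genuinely different (and conceptually cleaner) organisation than the paper's, which instead symmetrises the advection term in $(x,y)$ and splits $\Real^2\times\Real^2$ into $C_0\cup C_1\cup C_2$ according to how many of $x,y$ lie in the tubular region. Your route sidesteps the paper's $B_0=B_{00}+B_{01}$ manipulation ($\mathcal B(\delta)=\mathcal B(0)+O(\delta^{2-\alpha})$) entirely, since $\iint\tilde\theta_{\myspine}\tilde u_{\myspine}\cdot\grad\phi$ is literally the sharp-front weak form along $\myspine$. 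The time-derivative estimate and the correct reading of the spine condition as a slice-wise vanishing first moment are also both fine.

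However, there is a genuine gap in the advection estimate, precisely at the point you flag as ``the main obstacle.'' The slice-mean cancellation of $\theta-\tilde\theta_{\myspine}$ only gains a $\delta$ if the object you Taylor-expand across the slice has an $O(\delta^{-\alpha})$ gradient in the normal direction; but neither $u$ nor $u-\tilde u_{\myspine}$ satisfies this. Since $\grad u = \grad\grad^\perp K * \theta$ with a $-3-\alpha$-homogeneous kernel and $|\grad\theta|\lesssim\delta^{-1}$ on a layer of width $\delta$, one has $\|\grad u\|_{L^\infty(\md)}=O(\delta^{-1-\alpha})$, not $O(\delta^{-\alpha})$. So in the piece $\int(\theta-\tilde\theta_{\myspine})\,u\cdot\grad\phi\,\dd x$, Taylor expansion around $\myspine$ gives a remainder of size $\delta\cdot\delta^{-1-\alpha}$, and together with the $O(\delta)$ tube measure you land exactly back at $O(\delta^{1-\alpha})$ — the compatible-curve rate — with no improvement. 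The same bookkeeping defeats the swap $x\leftrightarrow y$ you propose for $\iint\tilde\theta_{\myspine}(u-\tilde u_{\myspine})\cdot\grad\phi$. The paper's essential additional ingredient, which your outline omits, is to integrate by parts \emph{before} invoking the spine cancellation: moving $\grad^\perp_x$ off the kernel (using $\grad^\perp\cdot\grad\Gamma=0$ and $\grad\theta|_{\del\md}=0$) trades $|x-y|^{-2-\alpha}$ for a boundary-curve integral with kernel $|x-y|^{-1-\alpha}$, and the resulting auxiliary functions $G$ and $Q$ satisfy $\|\grad G_2\|_{L^\infty}=O(\delta^{-\alpha})$ and $\|\grad^2 Q\|_{L^\infty}=O(\delta^{-\alpha})$ — one power better — which is what actually delivers the extra $\delta$ when the spine cancellation (or Proposition \ref{spine-property}) is then applied. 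Finally, the proposed ``refinement of Lemma \ref{lem-cheap-leibniz} tracking the vanishing normal mean in its layer-cake argument'' cannot be the right tool: layer-cake decompositions bound superlevel sets of $|f(x)-f(y)|^2$ and are blind to sign cancellation by construction, so they cannot see a vanishing first moment.
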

\begin{proof}
Without loss of generality, assume the constant for the base curve $C^{\mycurve} = 1$, and
  choose $\delta \ll 1$ so that we can extend the $\xi$ coordinate to the range $[-3,3]$, having a well defined neighbourhood of thickness $6\delta$.  Recall that $\theta$ is a weak solution if for every $\Gamma \in C^\infty_c((0,\infty) \times \Real^2)$,
\begin{align} 0 = \iint_{t\ge 0,x\in \Real^2} \theta \del_t \Gamma  + \theta (u\cdot \grad \Gamma)  \dd{x} \dd{t}.
\label{weaksoln+gammathing}
\end{align}

Define for each time $t$ the spine curve $\myspine=\myspine(s,t) \in \myspinein(t)$, the inner region bounded by the closed curve $\myspine + 2\delta \nrml$ with $\nrml$ the inward normal, the outer region $\myspineout(t)$ bounded by $\myspine - 2\delta \nrml$, and the tubular region $\myspinemid(t)$ in the middle of radius $2\delta$.   We give the names $\myspine^+$ and $\myspine^-$ to the inner and outer boundary curves of $\myspinemid$ respectively,
 \begin{align}
 \myspine^{+} &:= \myspine +2\delta \nrml = \mycurve + \delta(f + 2 )\nrml, \\
 \myspine^{-} &:= \myspine - 2\delta \nrml = \mycurve + \delta(f - 2 )\nrml    .
 \end{align}
 We thus have for each $t$ (up to null sets),
\begin{equation*}
    \begin{split}
        \Real^2 &= \myspinein(t) \cup \myspineout(t) \cup \myspinemid(t),\\ \myspinein(t) &\subseteq \{x : \theta(x,t) = +1/2\},\\ 
        \myspineout(t) &\subseteq \{x : \theta(x,t) = -1/2 \}.
    \end{split}
\end{equation*} 
Also define the related partition of $\Real^2 \times [0,\infty)$ by 
\begin{align*}
        \init
				= \bigcup_{t\geq 0} \myspinein(t) \times \set{t},
			\  
		\out
			= \bigcup_{t\geq 0}  \myspineout(t) \times \set{t}, \
		\md
				= \bigcup_{t\geq 0}  \myspinemid(t) \times \set{t}.
\end{align*} 
We treat the two integrands $\theta \del_t \Gamma $ and  $\theta (u\cdot \grad \Gamma)$ in \eqref{weaksoln+gammathing} separately, with the three sets to integrate over. For the first integrand, we have 
 \[ \br{\iint_{\init} + \iint_{\out} + \iint_{\md} } \theta \del_t \Gamma  \dd{x} \dd{t}=: I_\init + I_\out + I_\md. \]
In the tubular region, with $ L_1(s,\xi) = L(1-\delta \kappa(s) \xi) = L + O(\delta)$,
\begin{align*}
I_{\md} &= \iint_{t\ge 0,s\in\mathbb T }\int_{\xi = f -2}^{f + 2} \Omega(s,\xi,t) \del_t \Gamma(x(s,\xi),t) \delta L_1(s,\xi)  \dd{\xi}  \dd{s}\dd{t}\\
     &= \iint_{t\ge 0,s\in\mathbb T }\int_{\xi = f -2}^{f + 2} \Omega  \del_t\Gamma(x(s,\xi),t)L  \delta  \dd{\xi} \dd{s}\dd{t} + O(\delta^2)
.
\end{align*} By the spine cancellation property \eqref{eqn-spine-cancellation} we have
  \begin{align*}
  I_\md 
    &= \delta \iint_{t\ge 0,s\in\mathbb T }\int_{\xi = f -2}^{f + 2} \Omega [ \del_t\Gamma(x,t) -   \del_t\Gamma(\myspine,t)] L \dd{\xi} \dd{s}\dd{t} + O(\delta^2)
    \\&
    = O(\delta^2), 
  \end{align*}
since $|\del_t\Gamma(x(s,\xi),t)-\del_t \Gamma(\myspine(s),t)|\lesssim |x(s,\xi)-\myspine(s)| = O(\delta) $ uniformly in $s$ and $\xi$. 
For $I_\init$, we apply the Divergence theorem in 3D,
\begin{align*}
\iint_{(x,t)\in \init} \theta \del_t\Gamma  \dd{t}\dd{x} &= \frac{1}{2} \iint_{(x,t)\in \init} \grad_{x^1,x^2,t} \cdot {\tiny\begin{bmatrix}
    0 \\ 0 \\ \Gamma 
\end{bmatrix}}  \dd{t}\dd{x}  \\
&= \frac{1}{2}\iint_{t\ge 0,s\in\mathbb T } \Gamma(\myspine^+ , t) \del_t[\myspine^+] \cdot \del_s[\myspine^+]^\perp \dd{s}\dd{t}. 
\end{align*}
Similarly for $I_\out$ we obtain the term (note the minus sign from the opposite orientation)
\begin{align*}
 \iint_{(x,t)\in \out} \theta \del_t\Gamma  \dd{t}\dd{x}  &= \frac{-1}{2}\iint_{t\ge 0,s\in\mathbb T } \Gamma(\myspine^-, t) (-\del_t[\myspine^-]) \cdot \del_s[\myspine^-]^\perp \dd{s}\dd{t}\\
 &= \frac{1}{2}\iint_{t\ge 0,s\in\mathbb T } \Gamma(\myspine^-, t) \del_t[\myspine^-] \cdot \del_s[\myspine^- ]^\perp \dd{s}\dd{t}.\end{align*}
Since $\myspine^\pm = \myspine \pm O(\delta)$, we obtain by the approximation formula valid for $C^2$ functions,
$f(a+b) + f(a-b) = 2f(a) + O(b^2),    $
(with $O(b^2)$ constant depending on $\|f''\|_{L^\infty}$) that
\[ \iint_{\init \cup \out} \theta \del_t\Gamma  \dd{t}\dd{x} = \iint_{t\ge 0,s\in\mathbb T } \Gamma(\myspine) \del_t \myspine \cdot \del_s \myspine^\perp \dd{s}\dd{t}+ O(\delta^2), \]
with $O(\cdot)$ constant depending on $\| \Gamma \|_{C^2}$ and the geometry of the base curve $\mycurve$. We therefore obtain that $I_\init + I_\out + I_\md 
= \iint_{t\ge 0,x\in\mathbb R^2}  \theta \del_t\Gamma  \dd{t}\dd{x}$ can be written as
\begin{align*} I_\init + I_\out + I_\md 
&= \iint_{t\ge 0,s\in\mathbb T } \Gamma(\myspine) \del_t \myspine \cdot \del_s \myspine^\perp \dd{s}\dd{t}+ O(\delta^2). \end{align*}
For the second term, define $B(t)$ as the following integrand,
\begin{equation}
\int_{x\in\Real^2,t\ge 0} \theta u \cdot\grad\Gamma  \dd{x} \dd{t}=: \int_{t\ge 0} B(t)  \dd{t}.
\end{equation}
We need to control $B(t)$.  We symmetrise the integrand of $B(t)$ by reversing the roles of $x$ and $y$, obtaining
\[ 2B(t) = \int_{x\in \Real^2}\int_{y \in \Real^2} \grad_x^\perp |x-y|^{-1-\alpha} \cdot[\grad\Gamma(x) - \grad\Gamma(y)]\theta(x)\theta(y)  \dd{x}\dd{y}.    
\]
 We split $\Real^2\times\Real^2 = C_{0} \cup C_{1} \cup C_{2}$, where the subscript in $C_i$ depends on whether none of, one of, or both of $x$  and $y$ are in $\md$ respectively,
\begin{equation}
\begin{split}
    C_0 &=  ({\md})^c\times ({\md})^c, \\
    C_1 &=  \left[{\md}\times ({\md})^c\right] \cup \left[ ({\md})^c \times {\md}\right],  \\
    C_2 &=  {\md}\times {\md}.
\end{split}
\end{equation}  
Then write $B_0(t) +B_1(t) + B_2(t) = B(t)$, where 
 \[ B_i(t) = \frac12 \iint_{(x,y)\in C_i} \grad_x^\perp |x-y|^{-1-\alpha} \cdot [\grad\Gamma(x) - \grad\Gamma(y)]\theta(x)\theta(y)   \dd{x}\dd{y}, \] 

 $B_1$ is an error term. Indeed, \begin{align*}
 2B_1(t)
 &=  \iint_{(x,y)\in C_1} \grad_x^\perp |x-y|^{-1-\alpha}\cdot [\grad\Gamma(x) - \grad\Gamma(y)]\theta(x)\theta(y)   \dd{x}\dd{y} \\
 &= \br{ \iint_{x\in \md , y \in (\md)^c} + \iint_{x\in (\md)^c , y \in \md}}  \grad_x^\perp |x-y|^{-1-\alpha}\cdot\\
 & \quad [\grad\Gamma(x) - \grad\Gamma(y)]\theta(x)\theta(y)   \dd{x}\dd{y}\\
 &=: B_{11} + B_{12}.
\end{align*}
We show below that $B_{12}$ is of order $O(\delta^{2-\alpha})$; $B_{11}$ can be estimated similarly. Integrating by parts the derivative in $\nabla^\perp_x |x-y|^{-1-\alpha}$ and using the product rule $\grad^\perp \cdot (f V) = \grad^\perp f \cdot V + f\grad^\perp \cdot V$, we are left with only the boundary terms because $\grad^\perp \cdot \grad \Gamma = 0$ and also $\grad^\perp \theta|_{x\in(\md)^c} = 0$. To get the sign on the integration by parts correct, note 
\begin{align*} \int_{ \md^c} \nabla\cdot  F(x) \dd{x} 
&= \int_{\del \md^c}  F(x)\cdot N_{\out,\del \md^c}(x) \dd{x}
\\
&= \int_{\mathbb T} F(\myspine^-_* )\cdot (\del_s \myspine^-_*)^\perp \dd{s_*} - \int_{\mathbb T} F(\myspine^+_* )\cdot (\del_s \myspine^+_*)^\perp \dd{s_*}\\
&= -\sum_{\sigma = \pm 1} \int_{\mathbb T} F(\myspine^\sigma_* )\cdot (\del_s \myspine^\sigma_*)^\perp \sigma \dd{s_*} 
  \end{align*}
  So as $\theta(\myspine^\sigma) = \frac{\sigma}2$, $\nabla^\perp \cdot F = \nabla \cdot (-F^\perp)$, and $\nabla\theta |_{\del\md} = 0$, we have the following versions of Divergence Theorem:
\begin{align}
  \int_{ \md^c} \nabla^\perp \cdot  F(x) \theta(x) \dd{x} 
  &= \frac12\sum_{\sigma = \pm 1} \int_{\mathbb T} F(\myspine^\sigma_* )\cdot (\del_s \myspine^\sigma_*)  \dd{s_*}   , \label{eqn-signsgradperp} \\ 
     \int_{ \md^c} \nabla \cdot  F(x) \theta(x) \dd{x} 
  &= -\frac12\sum_{\sigma = \pm 1} \int_{\mathbb T} F(\myspine^\sigma_* )\cdot (\del_s \myspine^\sigma_*)^\perp  \dd{s_*}  \label{eqn-signsgrad}
 .
\end{align}
Therefore, we can write $B_{12}$ as follows:
\begin{align*}
&B_{12} \\
&= \int_{\md} \theta(y) \int_{\del\md^c} |x-y|^{-1-\alpha}  [\grad\Gamma(x) - \grad\Gamma(y)] \cdot  \tgt_{\del \md^c}(x) \theta(x)   \dd{l(x)}\dd{y}  \\
&= \int_{\md} \frac{\theta(y)}{2} \underbrace{\sum_{\sigma = \pm 1} \int_{\mathbb T} |\myspine^\sigma_* -y|^{-1-\alpha}  [\grad\Gamma(\myspine^\sigma_* ) - \grad\Gamma(y)] \cdot\del_s\myspine^\sigma_*\  \dd{s}_*}_{=:G(y)}  \dd{y}  \end{align*} 
Writing the $y$-integral in tubular coordinates around $\myspine$, we can use the cancellation identity \eqref{eqn-spine-cancellation} to see that
\[ 2B_{12} = \int_{\mathbb T}\int_{\xi=f -2}^{f +2} \Omega(s,\xi) (G(y(s,\xi)) - G(\myspine)) \delta L  \dd{s}\dd{\xi} + O(\delta^2).\]
In what follows, we will write $y$ for the parameterised point $y=y(s,\xi)$. 
If we can prove $|G(y) - G(\myspine)|\lesssim \delta^{1-\alpha}$,  this would imply that $B_{12} = O(\delta^{2-\alpha})$. We now use a smooth cut-off function $\rho_\delta(s) = \rho(s/\delta)$ with $ \supp \rho = [-1,1]$, $\rho|_{[-1/2,1/2]} = 1$ to split $G(y)$ into two parts, %
%
\begin{align*}
    G(y) &= \sum_{\sigma = \pm 1} \int_{\mathbb T} \underbrace{|\myspine^\sigma_* - y|^{-1-\alpha}[\grad \Gamma (\myspine^\sigma_*) - \grad\Gamma(y)]}_{\mathcal G(\myspine_*^\sigma,y)}    \cdot \del_{s_*}(\myspine^\sigma_*)  \dd{s}_* \\
    &= \sum_{\sigma = \pm 1} \int_{\mathbb T} \rho_\delta (s-s_*)  \auxG(\myspine_*^\sigma , y) \cdot \del_{s_*}(\myspine^\sigma_*)  \dd{s}_*  \\
    &\quad + \sum_{\sigma = \pm 1} \int_{\mathbb T} \left( 1 -  \rho_\delta (s-s_*)  \right) \auxG(\myspine_*^\sigma , y)  \cdot \del_{s_*}(\myspine^\sigma_*)  \dd{s}_*  \\
    &=: G_1(y) + G_2(y).
\end{align*}
Note the function $\auxG = \auxG(x,y)$ defined by the above lines, with $x=\myspine_*^\sigma$.
For $G_1$, the support  in $s_*$ of $\rho_\delta (s-s_*) $ gives us the required control using $|\auxG(x,y)| \lesssim_\Gamma |x-y|^{-\alpha}$, so that $|\auxG(\myspine_*^\sigma,y)| = O(|s_*-s|^{-\alpha})$. Therefore, we have the bound
\[ |G_1| \le \|\rho\|_{L^\infty} \|\del_s \myspine\|_{L^\infty} \| \auxG(\myspine_*^\sigma , y)\|_{L_{s_*}^1[s-\delta,s+\delta]} = O_{\rho,\Gamma} (\delta^{1-\alpha}).\]
So it now suffices to study the derivative of $G_2$, since
\begin{align*}
|G(y)-G(\myspine)| &\lesssim |G_2(y)-G_2(\myspine)| + O(\delta^{1-\alpha}) \\ & \hspace{-2em}  \lesssim \|\nabla G_2\|_{L^\infty} |y-\myspine| + O (\delta^{1-\alpha}) = \|\nabla G_2\|_{L^\infty} O(\delta) + O (\delta^{1-\alpha})  .
\end{align*}
In the tubular coordinates $y=y(s,\xi)$, we have to control the two terms $\del_s G_2(y(s,\xi))$ and $\del_\xi G_2(y(s,\xi))/\delta $. The first term is
\begin{align*}
 \del_s G_2(y) 
&= \sum_{\sigma = \pm 1} \int_{\mathbb T} -\del_{s_*} \br{1- \rho_\delta (s-s_*) } \auxG(\myspine_*^\sigma , y)  \cdot \del_{s_*}(\myspine^\sigma_*)  \dd{s}_* \\
&+ \sum_{\sigma = \pm 1} \int_{\mathbb T} \br{1- \rho_\delta (s-s_*) } \del_s\auxG(\myspine_*^\sigma , y)  \cdot \del_{s_*}(\myspine^\sigma_*)  \dd{s}_*     
\\
&= \sum_{\sigma = \pm 1} \int_{\mathbb T} \br{1- \rho_\delta (s-s_*) }( \del_{s_*}\auxG(\myspine_*^\sigma , y)+ \del_{s}\auxG(\myspine_*^\sigma , y))  \cdot \del_{s_*}(\myspine^\sigma_*)  \dd{s}_* \\
&+ \sum_{\sigma = \pm 1} \int_{\mathbb T} \br{1- \rho_\delta (s-s_*) } \auxG(\myspine_*^\sigma , y)  \cdot \del^2_{s_*}(\myspine^\sigma_*)  \dd{s}_* .
\end{align*}
The worst term $\del_{s_*}\auxG(\myspine_*^\sigma , y)+ \del_{s}\auxG(\myspine_*^\sigma , y)$ is $ O(|s_*-s|^{-1-\alpha})$, and the cutoff function restricts the integration to the region $|s-s_*|\ge\delta/2$. Thus, $\partial_s G_2 = O(\delta^{-\alpha})$.

The other derivative $\frac{\del_\xi }{\delta} G_2(y)$ is simpler since the cutoff $\rho_\delta$ does not depend on $\xi$, 
\begin{align*}
\frac{\del_\xi}{\delta} G_2(y) = \sum_{\sigma = \pm 1}  \int_{\mathbb T} \left( 1 -  \rho_\delta (s-s_*)   \right)  \frac{\del_\xi}{\delta}\underbrace{\br{ \frac{\grad \Gamma (\myspine^\sigma_*) - \grad\Gamma(y)}{|\myspine^\sigma_* - y|^{1+\alpha}}}}_{=:G_{2,1}(y)}\cdot \del_{s_*}\myspine^\sigma_*  \dd{s}_* 
\end{align*}
Note that $\frac{\del_\xi}{\delta}G_{2,1}(y) = \nabla_y G_{2,1}(y) \cdot \frac{\del_\xi}{\delta} y$, and  $\frac{\del_\xi}{\delta} y = \nrml = O(1)$; so we have an  $O(|s-s_*|^{-1-\alpha})$ integrand, integrated on the region $|s-s_*|>\delta/2$. Hence, $G_2(y) = O(\delta^{2-\alpha})$,  so $B_{11},B_{12}= O(\delta^{2-\alpha})$, and therefore
\[ B_1 = O(\delta^{2-\alpha}).   \]
\begin{align} 
 &2B_2(t)  \\
 &=  \iint_{(x,y)\in C_1} \grad_x^\perp |x-y|^{-1-\alpha} \cdot [\grad\Gamma(x) - \grad\Gamma(y)]\theta(x)\theta(y)   \dd{x}\dd{y} \\
 \label{similar-computation} &= -\frac12\sum_{\sigma=\pm 1}\int_{\mathbb T}\int_{\md} \theta(y) |\myspine^\sigma_*-y|^{-1-\alpha}   [\grad\Gamma(\myspine^\sigma_*) - \grad\Gamma(y)]\cdot \del_{s_*} (\myspine^\sigma_*)     \dd{s}_* \dd{y} \\
 & \quad - \int_{\md } 
    \underbrace{
        \br{
        \int_{\md }  |x-y|^{-1-\alpha}  [\grad\Gamma(x) - \grad\Gamma(y)]\theta(y)   \dd{y}
        }%
    }_{=:Q(x)}%
    \cdot \grad^\perp\theta(x) \dd{x} \label{eqn-here-we-use-error-estimate}
 \end{align}
Above, the $\grad^\perp_x$ never falls on $\grad\Gamma$ due to $\grad\cdot\grad^\perp = 0$. For \eqref{similar-computation}, the singularity is no worse than the one for $B_1$ and can be treated in exactly the same way. For \eqref{eqn-here-we-use-error-estimate}, we aim to use  \eqref{eqn-spine-property} of Proposition \ref{spine-property}, so we need to estimate  $\| \nabla^2 Q\|_{C^2}$. In what follows, we concatenate vectors to denote a tensor e.g. $(UWV)_{ijk} = U_i W_j V_k $, and $(\nabla^2 F)_{ijk} = \del_i\del_j F_k$. Integrating by parts twice, and introducing the term $r=r(x)$,
\begin{align}
 \nabla^2 Q (x) 
 &= \int_{\md } \nabla^2_x \left( |x-y|^{-1-\alpha}  [\grad\Gamma(x) - \grad\Gamma(y)]\right) \theta(y)  \dd{y} \label{eqn-522}
 \\
 &\hspace{-3em}=      \int_{\md } \nabla^2_y \left( |x-y|^{-1-\alpha}  [\grad\Gamma(x) - \grad\Gamma(y)]\right) \theta(y)  \dd{y}  \label{eqn-523} \\ 
 &\hspace{-2.1em} + (\nabla_x^2  - \nabla_y^2)\left( |x-y|^{-1-\alpha}  [\grad\Gamma(x) - \grad\Gamma(y)]\right)  
 \\
 &\hspace{-3em}= \int_{\del\md} \nabla_y \left( |x-y|^{-1-\alpha}  [\grad\Gamma(x) - \grad\Gamma(y)]\right)   \theta(y)\nrml_{\out,\del \md}  \dd{l(y)} \label{eqn-524}  
 \\
 & \hspace{-2.1em}+\int_{\md }  \nabla_y(|x-y|^{-1-\alpha}  [\grad\Gamma(x) - \grad\Gamma(y)] )\nabla \theta(y)  \dd{y}+ O(|x-y|^{-1-\alpha}). \label{eqn-525}
\end{align}
The boundary term
 \eqref{eqn-524} from integation by parts is
\begin{align*}
  &\int_{\del\md} \nabla_y \left( |x-y|^{-1-\alpha}  [\grad\Gamma(x) - \grad\Gamma(y)]\right)   \theta(y) \nrml_{\out,\del\md} \dd{s}  
  \\
&  =  \frac{1}{2}\sum_{\sigma = \pm 1} \int_{\mathbb T} \nabla_y \left( |x-y|^{-1-\alpha}  [\grad\Gamma(x) - \grad\Gamma(y)]\right)\Big|_{y=\myspine^\sigma}   (\del_s \myspine^\sigma)^\perp \dd{s}.
\end{align*}
For the remaining term in \eqref{eqn-525}, we have \begin{align*}
&|\nabla_y(|x-y|^{-1-\alpha}  [\grad\Gamma(x) - \grad\Gamma(y)] ) |
 \\
 &=\left|\frac{(-1-\alpha)(x-y)(\nabla \Gamma(x) - \nabla \Gamma (y)) + \nabla^2 \Gamma(y)|x-y|^2}{|x-y|^{3+\alpha}}     \right|
 \\
 &= \frac{(1+\alpha)|\nabla^2\Gamma(x)|}{|x-y|^{1+\alpha}} + O(|x-y|^{-\alpha}).
\end{align*}
All of these terms \eqref{eqn-523}, \eqref{eqn-524}, \eqref{eqn-525} are $O(\delta^{-\alpha})$ terms, which can be seen by using the asymptotic lemma \ref{prop-asymptotic-integral} to compute the $s$ integral to leading order. For instance, for \eqref{eqn-525},
we write out the integral explicitly using the coordinates $y = \mycurve(s_*) + \delta \xi_*\nrml(s_*) $, $x = \mycurve(s) + \delta \xi \nrml(s)$,and the growth condition $|\nabla \theta|\lesssim  \frac1\delta$ from \eqref{eqn-growth-condition},
\begin{align}
    &\left|\int_{\md }  \nabla_y(|x-y|^{-1-\alpha}  [\grad\Gamma(x) - \grad\Gamma(y)] )\nabla \theta(y)  \dd{y}\right|\\
    & \le \frac{1 }{\delta} (1+\alpha) |\nabla^2 \Gamma(x)| \int_{\xi=f-2}^{f+2}  \int_{\mathbb T} \frac{\delta L(1-\delta\kappa_* \xi_*) } {|\mycurve-\mycurve_* + \delta (\xi \nrml-\xi_* \nrml_*)|^{1+\alpha} } \dd{s_*} \dd{\xi_*} \\ 
    &= O(\delta^{-\alpha}),
\end{align}
which follows by applying the asymptotic Lemma \ref{prop-asymptotic-integral}. Thus,  $\|\nabla^2Q\|_{L^\infty} = O(\delta^{-\alpha})$, and Proposition \ref{spine-property} implies that
\begin{align} 2B_2(t) &= \int_{\mathbb T} 
\br{ 
        \int_{\md }  |\myspine-y|^{-1-\alpha}  [\grad\Gamma(\myspine) - \grad\Gamma(y)]%
        \theta(y)   \dd{y}%
    }%
    \cdot \del_s\myspine \dd{s}\\ &+ O(\delta^{2-\alpha}).
\end{align}
Reversing the order of integration and using the tubular coordinates we notice we can again use the spine condition \eqref{eqn-spine-cancellation} to bring out an extra cancellation. That is, defining $H$ by
\[ H(y_*):=\int_{\mathbb T} |\myspine-y_*|^{-1-\alpha}  [\grad\Gamma(\myspine) - \grad\Gamma(y_*)]\cdot \del_s\myspine  \dd{s},\]
we deduce that
\begin{align*} 
2B_2(t) 
    &= \int_{f - 2}^{f+2}\int_{\mathbb T}%
        H(y_*) \Omega_*%
     \delta L_{1*} \dd{s}_* \dd{\xi}_* + O(\delta^{2-\alpha}) \\%
    &= \delta \int_{f - 2}^{f+2}\int_{\mathbb T} \Omega_* H(y_*)  L \dd{s}_* \dd{\xi}_*  + O(\delta^{2-\alpha})\\
    &= \delta \int_{f - 2}^{f+2}\int_{\mathbb T} \Omega_* \br{H(y_*) - H(\myspine_*)} L\dd{s}_* \dd{\xi}_*  + O(\delta^{2-\alpha})\\%
    &= O(\delta^{2-\alpha}).%
\end{align*}

The final term to estimate is
\[ B_0(t) = \int_{(\md)^c} \int_{(\md)^c} \grad_x^\perp |x-y|^{-1-\alpha} \cdot\grad\Gamma(x)\theta(x)\theta(y)  \dd{x}\dd{y}. \]
We use both gradients appearing in $B_0$ to integrate by parts (via the formulas \eqref{eqn-signsgrad},\eqref{eqn-signsgradperp}) , on which we obtain only boundary terms due to either of the two cancellations $\grad^\perp\cdot \grad = 0$ or $\grad\theta |_{\del \md} = 0$:
\begin{align*}
&B_0(t)\\
&= \int_{(\md)^c} \int_{(\md)^c} \grad_x^\perp |x-y|^{-1-\alpha} \cdot\grad\Gamma(x)\theta(x)\theta(y)  \dd{x}\dd{y}\\ 
&= -\frac12\sum_{\sigma_1=\pm 1}\int_{(\md)^c}\int_{\mathbb T} \grad_x^\perp |x-y|^{-1-\alpha}|_{x=\myspine^{\sigma_1}}\cdot (\del_s \myspine^{\sigma_1})^\perp \Gamma(\myspine^{\sigma_1}) \theta(y) \dd{s} \dd{y}\\
&= \frac12\sum_{\sigma_1=\pm 1}\int_{(\md)^c}\int_{\mathbb T} \grad_y^\perp |\myspine^{\sigma_1}-y|^{-1-\alpha}\cdot (\del_s \myspine^{\sigma_1})^\perp \Gamma(\myspine^{\sigma_1}) \theta(y) \dd{s} \dd{y}\\
&= \frac14\sum_{\sigma_1=\pm 1}\sum_{\sigma_2=\pm 1}\int_{\mathbb T}\int_{\mathbb T}  |\myspine^{\sigma_1}-\myspine^{\sigma_2}_*|^{-1-\alpha}\cdot (\del_s \myspine^{\sigma_1})^\perp\cdot \del_s\myspine^{\sigma_2}_* \Gamma(\myspine^{\sigma_1}) \dd{s} \dd{s_*}\\
&= -\frac14 \sum_{\sigma_1 = \pm 1}\sum_{\sigma_2 = \pm 1} \int_{\mathbb T} \int_{\mathbb T} |\myspine^{\sigma_1} - \myspine^{\sigma_2}_*|^{-1-\alpha} \del_s\myspine^{\sigma_1}  \cdot [\del_s\myspine^{\sigma_2}_*]^\perp \Gamma(\myspine^{\sigma_1})   \dd{s}\dd{s}_*.
\end{align*}

This sum of four terms will now be regrouped into two terms $B_0(t) = B_{00}(t) + B_{01}(t)$, one where $\sigma_1 = \sigma_2$ and one where $\sigma_1 = -\sigma_2$,
\begin{align*} 
B_{00}(t) 
&= -\frac14 \sum_{\sigma= \pm 1}   \int_{\mathbb T} \int_{\mathbb T} |\myspine^{\sigma} - [\myspine^{\sigma}]_*|^{-1-\alpha} \del_s\myspine^{\sigma} \cdot [\del_s\myspine^{\sigma}]^\perp_* \Gamma(\myspine^{\sigma})   \dd{s}\dd{s}_*, \\
B_{01}(t) 
&= -\frac14 \sum_{\sigma= \pm 1}   \int_{\mathbb T} \int_{\mathbb T} |\myspine^{\sigma} - [\myspine^{-\sigma}]_*|^{-1-\alpha} \del_s\myspine^{\sigma} \cdot [\del_s\myspine^{-\sigma}]^\perp_* \Gamma(\myspine^{\sigma})   \dd{s}\dd{s}_* .
\end{align*}
For $B_{00}(t)$, we can use the formula $f(a+b) + f(a-b) = 2f(a) + O(b^2)$ to get that
\[ B_{00}(t) = -\frac12 \int_{\mathbb T}\int_{\mathbb T} \frac{\del_s\myspine \cdot \del_s\myspine_*^\perp}{|\myspine - \myspine_*|^{1+\alpha}} \Gamma(\myspine)  \dd{s}\dd{s}_* + O(\delta^2).\] 
\newcommand{\Bdelta}{\mathcal B}
Treating $B_{01}(t)=:\Bdelta(\delta)$  as a function of $\delta$, we need to prove that
\[ \Bdelta(\delta) = \Bdelta(0) + O(\delta^{2-\alpha}),\]
since
$\Bdelta(0) = - \frac12 \int_{\mathbb T}\int_{\mathbb T} \frac{\del_s\myspine \cdot \del_s\myspine_*^\perp }{|\myspine - \myspine_*|^{1+\alpha}} \Gamma(\myspine)  \dd{s}\dd{s_*}$ with $B_{00}(t)$ gives the required evolution term:
\[ B_{00}(t) + \mathcal B(0) = \int_{\mathbb T}\left(\int_{\mathbb T} \frac{\del_s\myspine_* - \del_s\myspine }{|\myspine - \myspine_*|^{1+\alpha}}  \dd{s_*}\right)\cdot \del_s\myspine^\perp  \Gamma(\myspine) \dd{s}  + O(\delta^2).\]
Symmetrizing as before, we obtain
\[ \Bdelta(\delta) = -\frac{1}{8}\sum_{\sigma= \pm 1}   \int_{\mathbb T} \int_{\mathbb T} \frac{\del_s\myspine^{\sigma} \cdot [\del_s\myspine^{-\sigma}]^\perp_* }{|\myspine^{\sigma} - [\myspine^{-\sigma}]_*|^{1+\alpha} } \br{\Gamma(\myspine^{\sigma}) - \Gamma([\myspine^{-\sigma}]_*)}   \dd{s}\dd{s}_*. \]
Since  \eqref{eqn-spine-evolution} involves test functions supported on the curve $\myspine$, we may assume that $\Gamma$ does not depend on $\xi$ on a $\delta$ neighbourhood of $\myspine$, giving 
\[ \Bdelta(\delta) = -\frac{1}{8}\sum_{\sigma= \pm 1}   \int_{\mathbb T} \int_{\mathbb T} \frac{ \del_s\myspine^{\sigma} \cdot [\del_s\myspine^{-\sigma}]^\perp_*}{|\myspine^{\sigma} - [\myspine^{-\sigma}]_*|^{1+\alpha} } \br{\Gamma(\myspine) - \Gamma(\myspine_*)}   \dd{s}\dd{s}_*. \]
Note that $\Bdelta(0)$ has the well-behaved $O(|s-s_*|^{1-\alpha})$ integrand. Recalling that $\Bdelta'(\tilde\delta) = \frac{\Bdelta(\delta)-\Bdelta(0)}{\delta}$ for some $\tilde\delta\in(0,\delta)$, it suffices to prove that
\[ \Bdelta'(\delta) \overset{?}{=} O(\delta^{1-\alpha}),\]
since $s\mapsto s^{1-\alpha}$ is increasing for $0<s$. On differentiation with respect to $\delta$, a factor of $\sigma$ appears, which means the sum over $\sigma=\pm 1$ becomes a symmetric difference. We expand the shorthand notation $\del_s\myspine^{\sigma}\cdot [\del_s\myspine^{-\sigma}]^\perp_*$ to find the derivative in $\delta$,
\[
    \del_s\myspine^{\sigma}\cdot [\del_s\myspine^{-\sigma}]^\perp_* = \del_s\myspine \cdot \del_s\myspine^\perp_* + \sigma \delta(\del_s\myspine \cdot \tgt_* + \tgt \cdot \del_s\myspine_*) + O(\delta^2) .
\]
Hence, its $\delta$-derivative is some bounded function, say $E(s,s_*)$.
When $\del_\delta$ hits the kernel $|\myspine^{\sigma} - [\myspine^{-\sigma}]_*|^{-1-\alpha}$, we have
\begin{align*} & \del_\delta |\myspine^{\sigma} - [\myspine^{-\sigma}]_*|^{-1-\alpha} \\
&= (-1-\alpha)|\myspine^{\sigma} - [\myspine^{-\sigma}]_*|^{-3-\alpha}(\myspine^{\sigma} - [\myspine^{-\sigma}]_*)\cdot \del_\delta ( \myspine^{\sigma} - [\myspine^{-\sigma}]_*) \\    
&= (-1-\alpha)|\myspine^{\sigma} - [\myspine^{-\sigma}]_*|^{-3-\alpha}[(\myspine - \myspine_*)\cdot \sigma (\nrml+\nrml_*) + O(\delta)].   
\end{align*}
With the cancellation from the symmetrisation in $\Gamma$, we see that we have
\begin{align*}
&-8\Bdelta'(\delta)  = \sum_{\sigma=\pm1} \sigma \int_{\mathbb T} \int_{\mathbb T} |\myspine^\sigma - [\myspine^{-\sigma}]_*|^{-1-\alpha} (\Gamma(\myspine) - \Gamma(\myspine_*)) \times \\ & \quad  \Bigg( E(s,s_*)  + (-1-\alpha) \del_s\myspine\cdot[\del_s\myspine]^\perp_* \frac{\myspine^{\sigma} - [\myspine^{-\sigma}]_*}{|\myspine^{\sigma} - [\myspine^{-\sigma}]_*|^2} \Bigg)\dd{s}\dd{s}_* + O(\delta) .
\end{align*}
The factor of $\sigma$ means that we can use the Mean Value Theorem in the form $f(x+\delta) - f(x-\delta) =  O(\delta)$ to obtain that actually $\Bdelta'(\delta) = O(\delta)$, which finally implies the result. 
\end{proof}

\section{Acknowledgements}
Calvin Khor is supported by the studentship part of the ERC consolidator project $\text n^0$ $616797$. Jos\'e L. Rodrigo is partially supported by the ERC consolidator project  $\text n^0$ $616797$.
\appendix

\section{Asymptotic for a parameterised integral}
\label{sectionAppendix}
\begin{lem}\label{prop-asymptotic-integral} Let $\alpha\in(0,1)$ and $\Torus:=\Real / \mathbb Z$. For $\tau>0$, let $I=I(\tau)$ denote the following family of integrals,
\[ I = \int_{s\in \Torus} \frac{a(s)}{|g(s) + \tau^2|^{(1+\alpha)/2}} \ \dd{s}, \]
where $a=a(s),g=g(s) \in C^\infty (\Torus)$ and $g$ has 0 as its unique global minimum at $s=0$ that is non-degenerate, i.e. 
$g''(0)>0,\quad \operatorname{argmin} g = 0, \quad g(0) = \min g = 0.$
Then we have the asymptotic relation as $\tau\to0$,
\begin{align*}
    I 
    &= \frac{a(0)}{G} C_\alpha \tau^{-\alpha} 
    - \frac{a(0)(\alpha^{-1} 2^{1+\alpha} + b_\alpha)}{ G^{1+\alpha} }\\ 
    &+ \int_{s\in\Torus} \frac{a(s)}{|g(s)|^{(1+\alpha)/2}} - \frac{a(0)}{G^{1+\alpha}|\mysin s|^{1+\alpha}} \ \dd{s}  
    + O(\tau^{2-\alpha}),
    \quad  (\tau\to 0) ,
\end{align*} where:
\begin{enumerate}
    \item $\mysin s := \sin(\pi s)/\pi $,
    \item $C_\alpha$ is the constant $C_\alpha =  \frac{\sqrt{\pi}\Gamma(\frac{\alpha}{2})}{\Gamma(\frac{\alpha+ 1}{2})}<\infty$ (which diverges as $\alpha\to0$),
    \item $b_\alpha$ is the constant $b_\alpha:= \int_{-1/2}^{1/2} (\frac{1}{|s|^{1+\alpha}} - \frac{1}{|\mysin s|^{1+\alpha}}) \ \dd{s}< \infty$,
    \item $G$ is the constant $G:=\sqrt{g''(0)/2}$ 
		, 
		and
    \item the $O(\tau^{2-\alpha})$ constant depends on $W^{3,\infty}$ norms of $a$ and $g$.
\end{enumerate}
\end{lem}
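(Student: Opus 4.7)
My plan is to isolate the singular behaviour near $s=0$ by subtracting an explicitly computable ``model'' integral. Since $g(s)\approx G^2 s^2$ near its unique minimum and we are working on $\mathbb T$, the natural torus-compatible model is
\[ M(\tau) := a(0)\int_{-1/2}^{1/2}(G^2\mysin^2 s + \tau^2)^{-(1+\alpha)/2}\,ds.\]
Writing $I(\tau) = M(\tau) + D(\tau)$ with $D$ the residual, I expect $M$ to produce both the leading $\tau^{-\alpha}$ term and the two explicit constants, while $D$ converges to the integral displayed in the statement, with a quantitative rate.

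To compute $M$ I would further decompose $M = M_1 + M_2$ where $M_1 := a(0)\int_{-1/2}^{1/2}(G^2 s^2 + \tau^2)^{-(1+\alpha)/2}\,ds$ is the flat quadratic model. The substitution $v = Gs/\tau$ rewrites $M_1$ as $\frac{a(0)}{G\tau^\alpha}\int_{-G/(2\tau)}^{G/(2\tau)}(v^2+1)^{-(1+\alpha)/2}\,dv$; extending the integration to $\mathbb R$ yields the $\tfrac{a(0)C_\alpha}{G\tau^\alpha}$ main term, and the tail estimate $\int_M^\infty(v^2+1)^{-(1+\alpha)/2}\,dv = \tfrac{M^{-\alpha}}{\alpha} + O(M^{-\alpha-2})$ evaluated at $M=G/(2\tau)$ contributes $-\tfrac{a(0)2^{1+\alpha}}{\alpha G^{1+\alpha}}$ with error $O(\tau^{2})$. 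For $M_2$, the integrand is $O(s^4(s^2+\tau^2)^{-(3+\alpha)/2})$ near zero, converges to $-a(0)b_\alpha/G^{1+\alpha}$ as $\tau\to 0$, and its $\tau$-increment is $O(\tau^{2-\alpha})$ by a second-difference bound in which the $s$-integral is split at $|s|=\tau$.

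For $D(\tau)$, the integrand at $\tau=0$ is exactly the one in the statement; it is integrable because the worst $\tfrac{a(0)}{(G|s|)^{1+\alpha}}$ singularities of the two pieces cancel, thanks to $a(s)=a(0)+O(s)$ and $g(s)=G^2 s^2(1+O(s))$, leaving an $O(|s|^{-\alpha})$ residual. I would then prove $D(\tau)\to D(0)$ by dominated convergence, and establish the quantitative error by representing
\[ (g+\tau^2)^{-(1+\alpha)/2} - g^{-(1+\alpha)/2} = -\tfrac{1+\alpha}{2}\int_0^{\tau^2}(g+t)^{-(3+\alpha)/2}\,dt, \]
together with an analogous identity for the $\mysin$ piece, splitting the $s$-integral at $|s|=\tau$ and bounding each region separately.

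The main technical obstacle is obtaining the sharp $O(\tau^{2-\alpha})$ rate rather than the naive $O(\tau^{1-\alpha})$ in the $D$-estimate. A direct Taylor bound is insufficient: the corrections $a(s)-a(0) = O(s)$ and $g(s)-G^2\mysin^2 s = O(s^3)$ combined with one derivative of the weight produce an $s^{-2-\alpha}$ integrand, which integrated from $\tau$ to $1/2$ yields only $\tau^{-1-\alpha}$ and gives $O(\tau^{1-\alpha})$ after multiplying by $\tau^2$. To recover the extra power of $\tau$, one must exploit that these two first-order corrections are odd in $s$ while the leading singular weight is even, so their integrals against the weight vanish over symmetric neighbourhoods of $s=0$; what remains are the genuinely even second-order corrections controlled by the $W^{3,\infty}$ norms of $a$ and $g$, consistent with item~(5) of the statement.
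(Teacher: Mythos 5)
Your proposal is correct, and it takes a genuinely different route from the paper. The paper follows the $\alpha=0$ argument of Fefferman--Rodrigo: it first splits $\mathbb T$ into a fixed (but $\tau$-independent) neighbourhood of the minimiser and its complement, then performs the change of variables $\sigma^2 = g(s)$ on the near region, which reduces matters to Taylor-expanding the single function $\bar a(\sigma)$ against the exactly-quadratic kernel $|\sigma^2+\tau^2|^{-(1+\alpha)/2}$. In that formulation the sharp $O(\tau^{2-\alpha})$ rate comes from subtracting $\bar a(0)+\bar a'(0)\sigma$ — the paper's term $I_{\mathrm{near},3}$ — which is precisely the odd-cancellation you identify, but in flattened coordinates where it is a one-variable Taylor remainder. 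The cost of this route is the bookkeeping of the coordinate change: relating $\sigma_0$ to $s_\pm$, tracking $\bar a(0) = a(0)/G$, and cancelling the $\pm \frac{a(0)}{\alpha G\,\sigma_0^\alpha}$ and $\pm\frac{a(0)}{\alpha G^{1+\alpha}|s_\pm|^\alpha}$ artefacts between the near and far pieces. Your route avoids the change of variables entirely: you subtract the explicit torus-periodic model $a(0)(G^2 \mysin^2 s+\tau^2)^{-(1+\alpha)/2}$ on the whole of $\mathbb T$ at once, compute its expansion in closed form, and estimate the residual $D$ directly. The main technical point is then exactly the one you flag: the naive Taylor bound on the residual gives only $O(\tau^{1-\alpha})$, and the extra power must come from the evenness of the kernel against the oddness of the first-order corrections to both $a$ and $g$, with the remaining even corrections controlled by the third-order norms of $a,g$ (consistently with item (5)). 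This does work, but note that since $g$ itself is not even, the cancellation should be packaged as a bound on the symmetrisation $F(s,t)+F(-s,t)$ of the full integrand $F(s,t)=a(s)(g(s)+t)^{-(3+\alpha)/2}-a(0)(G^2\mysin^2 s+t)^{-(3+\alpha)/2}$ rather than pairing each odd Taylor coefficient against a notionally even weight; once phrased that way, the $O(s^{-1-\alpha})$ bound on the even part for $|s|>\sqrt t$, combined with the split at $|s|=\tau$ and the integration over $t\in[0,\tau^2]$, gives $O(\tau^{2-\alpha})$ as needed. Your approach keeps everything in the original variable and makes the constants $C_\alpha$, $2^{1+\alpha}/\alpha$, $b_\alpha$ emerge from one explicit model integral, which is arguably cleaner; the paper's change of variables avoids having to track the interaction between corrections to $a$ and corrections to $g$ simultaneously, at the expense of extra bookkeeping.
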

\begin{proof}
The proof for the case $\alpha=0$ can be found in \cite{fefferman2012almost}. We take $(-1/2,1/2)$ as a fundamental domain for  $\mathbb T$.  We split $I=I_{\mynear} + I_{\myfar}$ into an integral $I_{\mynear}$ around the minimiser of $g$ and $I_{\myfar}$  on the complement,
\begin{align} A_\mynear &:= (s_- , s_+), & I_\mynear &:= \int_{A_\mynear} \frac{a(s)}{|g(s) + \tau^2|^{(1+\alpha)/2}} \ \dd{s}, \label{eqn-Anear}\\
    A_\myfar &:= \Torus \setminus A_\mynear, & I_\myfar &:= \int_{A_\myfar} \frac{a(s)}{|g(s) + \tau^2|^{(1+\alpha)/2}} \ \dd{s}, \label{eqn-Afar}
\end{align}
where $s_\pm $ are chosen (depending on $g$) sufficiently close to 0 so that we can choose new coordinates $\sigma$ such that $g(s) = \sigma^2$, and that $g(s_-) = g(s_+) =: \sigma_0^2 \ll 1$. Also define $\bar{a}(\sigma)$ so that $\bar{a}(\sigma)\dd{\sigma} = a(s)\dd{s}$ in the integral. Thus        $ I_\mynear = \int_{-\sigma_0}^{\sigma_0} \frac{\bar{a}(\sigma)}{|\sigma^2+\tau^2|^{(1+\alpha)/2}}  \dd{\sigma} = I_{\mynear,1} + I_{\mynear,2} + I_{\mynear,3} ,$ where
\begin{align*}
        I_{\mynear,1} &= \bar{a}(0) \int_{-\sigma_0}^{\sigma_0} \frac{1}{|\sigma^2+\tau^2|^{(1+\alpha)/2}}  \dd{\sigma}, \quad
        I_{\mynear,2} =  \int_{-\sigma_0}^{\sigma_0} \frac{\bar{a}(\sigma) - \bar{a}(0)}{|\sigma|^{1+\alpha}}  \dd{\sigma} ,\\
        I_{\mynear,3} &=  \int_{-\sigma_0}^{\sigma_0} \big(\bar{a}(\sigma) - \bar{a}(0) - \bar{a}'(0) \sigma \big)\left(\frac{1}{|\sigma^2+\tau^2|^{(1+\alpha)/2}} - \frac{1}{|\sigma|^{1+\alpha}} \right)  \dd{\sigma} .
\end{align*} 
By the regularity of $\bar{a}$, these integrals are well-defined. $I_{\mynear,1} $ is the only term that appears for a constant function $\bar{a} \equiv \bar{a}(0)$. $I_{\mynear,2}$ is bounded independent of $\tau$. $I_{\mynear,3}$ can easily be seen to be $O(\tau^{2-\alpha})$ using 
\[ \abs{ \frac1{|\sigma^2+\tau^2|^{(1+\alpha)/2}} - \frac1{\sigma^{1+\alpha}} } \le  \begin{cases}
 2 \sigma^{-1-\alpha} & |\sigma|\le \tau, \\
\tau^2 \sigma^{-3-\alpha}  & |\sigma|>\tau .
 \end{cases} \]
The first bound follows from the triangle inequality and $\frac1{|\sigma^2+\tau^2|^{(1+\alpha)/2}} \le \frac1{|\sigma|^{1+\alpha}} $; the second bound follows from $\frac{1+\alpha}2<1$ and the Mean Value Theorem applied to $f(x) = x^{-(1+\alpha)/2}$, i.e. for some $\theta\in(0,1)$,
\begin{align}
 f(x+h) - f(x) = f'(x+\theta h)h = -\left(\frac{1+\alpha}2\right)\frac{h}{|x+\theta h|^{(3+\alpha)/2}}, \quad \label{eqn-MVT}   
\end{align}
 with $x=\sigma^2,h=\tau^2$, and $ |x+\theta h|^{-(3+\alpha)/2} \le |x|^{-(3+\alpha)/2}$. This implies
\[
|I_{\mynear,3}| 
\lesssim_{\|a''\|_{L^\infty}}\int_{-\tau}^\tau  2\sigma^{2-1-\alpha}   \ \dd{\sigma} + \int_{ \tau \le |\sigma| \le \sigma_0} \tau^2 \sigma^{2-3-\alpha} \ \dd{\sigma} = O(\tau^{2-\alpha}).
\]

We focus now on $I_{\mynear,1}$.  Define
\begin{equation}
\begin{split}
J:=\int_{-\sigma_0}^{\sigma_0}  \frac{1}{|\sigma^2 + \tau^2|^{(1+\alpha)/2}}  \dd{\sigma} 
    &= 2 \tau^{-\alpha} \int_{0}^{\sigma_0/\tau}  \frac{1}{|\sigma^2 + 1|^{(1+\alpha)/2}}  \dd{\sigma}.
\end{split}    
\end{equation}
In contrast with the $\alpha=0$ case, the integrand is in $L^1(\Real)$, so we can easily write down the following expression with an error term,
\begin{equation}
    \begin{split}
        J 
        &= 2\tau^{-\alpha} \br{ \int_0^\infty \frac{\dd{\sigma}}{|\sigma^2+1|^{(1+\alpha)/2} } - \int_{\sigma_0/\tau}^\infty \frac{\dd{\sigma}}{|\sigma^2+1|^{(1+\alpha)/2} }} \\ 
        &= 2\tau^{-\alpha} \br{ \int_0^\infty \frac{\dd{\sigma}}{|\sigma^2+1|^{(1+\alpha)/2} } - \int_{\sigma_0/\tau}^\infty \frac{\dd{\sigma}}{\sigma^{1+\alpha}} } \\
       & +
         \int_{\sigma_0/\tau}^\infty   \frac{1}{\sigma^{1+\alpha}} -\frac{1}{|\sigma^2+1|^{(1+\alpha)/2} } \ \dd{\sigma}  = C_\alpha \tau^{-\alpha} - \frac{2}{\alpha \sigma_0^\alpha} + \text{Rem},
    \end{split}
\end{equation} 
where $C_\alpha = \int_\Real \frac{\dd{\sigma}}{|\sigma^2+1|^{(1+\alpha)/2}} = \frac{\sqrt{\pi}\Gamma(\frac{\alpha}{2})}{\Gamma(\frac{\alpha+ 1}{2})}<\infty $ and the remainder term $\text{Rem}$ satisfies (using \eqref{eqn-MVT} with $x=\sigma^2$, $h=1$ )
\[|\text{Rem}| \leq  2\tau^{-\alpha} \int_{\sigma_0/\tau}^\infty \frac{\dd{\sigma}}{\sigma^{3+\alpha}} = 2\tau^{-\alpha}\frac{\tau^{2+\alpha}}{(2+\alpha)\sigma_0^{2+\alpha}} = O(\tau^2), \quad \tau\to 0. \]
Hence we have that $J = C_\alpha \tau^{-\alpha} - \frac{2}{\alpha \sigma_0^\alpha} + O(\tau^2) $ as $\tau\to0$.


Recall the transformation's defining equation $g(s) = \sigma^2$. We can write $g(s) = g''(0)s^2/2 + O(s^3)$ since $s=0$ is a global nondegenerate minimum  of $g$ with $g(0) = 0$. Hence for $\sigma>0$ (and therefore $s>0$),
$ \sigma = s\sqrt{g''(0)/2+O(s)} =s\sqrt{g''(0)/2} + O(s^2) $ as $s\to 0^+$, by the differentiability in $h$ of $\sqrt{g''(0)/2 + h}$. The case $\sigma<0$ is treated similarly, leading to
\[ \sigma = \sqrt{g(s)} = \sqrt{\frac{g''(0)}{2}} s + O(s^2), \quad s\to 0.\] and hence $\dv{\sigma}{s}(s) \xrightarrow[s\to 0]{} \sqrt{\frac{g''(0)}{2}}$. Below, we use $G:=\sqrt{\frac{g''(0)}{2}}$. Since $a(s) = \dv{\sigma}{s}(s) \bar{a}(\sigma)$ we have
$ \bar{a}(0)  = a(0)/G,$ 
which allows us to rewrite $I_{\mynear,1}$,
\begin{align}
I_{\mynear,1} =  \frac{a(0)}{G}C_\alpha \tau^{-\alpha}  - 2\frac{a(0)}{\alpha G\sigma_0^\alpha }+ O(\tau^2), \quad \tau \to 0.\label{eqn-Inear1final}
\end{align}
Let us now treat $I_{\mynear,2}$. Let $0<\sigma_1\ll \sigma_0$, and let $s_{1-}<0,\  s_{1+} > 0$ be the two unique numbers such that $g(s_{1\pm}) = \sigma_1^2 $. Since $\bar{a}(\sigma)\dd{\sigma} = a(s)\dd{s}$, it is clear that $\frac{\bar{a}(\sigma)\dd{\sigma}}{|\sigma|^{1+\alpha}} = \frac{a(s)\dd{s}}{g(s)^{(1+\alpha)/2}}	$. Hence, we only need to rewrite the other term of the difference $\frac{(\bar{a}(\sigma) - \bar{a}(0))\dd{\sigma}}{|\sigma|^{1+\alpha}} $, which is $\bar{a}(0) \int_{\sigma_1}^{\sigma_0} \frac{\dd{\sigma}}{\sigma^{1+\alpha}}$. We would like to replace the integral in $\sigma$ with an integral in $s$. Observe that as $0<\sigma_1<\sigma_0$ and $0<s_{1+} < s_+$,
\[  \int_{\sigma_1}^{\sigma_0} \frac{\dd{\sigma}}{\sigma^{1+\alpha}} = \frac1\alpha\left(\frac1{\sigma_1^\alpha} - \frac1{\sigma_0^\alpha} \right),\text{ and } \int_{s_{1+}}^{s_+} \frac{\dd{s}}{s^{1+\alpha}} = \frac1\alpha\left(\frac1{s_{1+}^\alpha} - \frac1{s_+^\alpha} \right).\]
Thus, we have the following equality for any  constant ${\tilde C}$, 
\begin{equation} \int_{\sigma_1}^{\sigma_0} \frac{\dd{\sigma}}{\sigma^{1+\alpha}} ={\tilde C} \int_{s_{1+}}^{s_+} \frac{\dd{s}}{s^{1+\alpha}} + 
    \frac{1}{\alpha} \Bigg( \underbrace{\frac{1}{\sigma_1^\alpha} - \frac{{\tilde C}}{s_{1+}^\alpha}}_{\star} + \frac{{\tilde C}}{s_+^\alpha} - \frac1{\sigma_0^\alpha}\Bigg).  \end{equation}
Treating $s$ as a function $s=s(\sigma)$, the Inverse Function Theorem gives the asymptotic $s =  G^{-1} \sigma + O(\sigma^2)$ for $\sigma \ll 1$. 
Setting ${\tilde C} = G^{- \alpha}$, as then the terms marked with a star $\star$ become error terms for $\sigma_1 \ll 1$,
\[ \star = \frac{1}{\sigma_1^\alpha} - \frac{1}{(Gs_{1+})^\alpha} = \frac1{\sigma_1^\alpha} - \frac{1}{(\sigma_1 + O(\sigma_1^2))^{\alpha}} \le \frac{2 O(\sigma_1^2)}{\alpha \sigma_1^{1+\alpha}} = O(\sigma_1^{1-\alpha}) , 
 \]
We can do a similar analysis for the integral $\int_{-\sigma_0}^{-\sigma_1} \frac{\dd{\sigma}}{|\sigma|^{1+\alpha}}$, yielding
\[\int_{-\sigma_0}^{-\sigma_1} \frac{\dd{\sigma}}{|\sigma|^{1+\alpha}} = \frac1{G^{\alpha}}\int_{s_-}^{s_{1-}} \frac{\dd{s}}{|s|^{1+\alpha}} + \frac1\alpha \Bigg(\underbrace{ \frac1{\sigma_1^\alpha} -\frac1{|Gs_{1-}|^\alpha}}_{=O(\sigma_1^{1-\alpha})} + \frac1{|Gs_{-}|^\alpha} - \frac1{\sigma_0^\alpha}  \Bigg) \]
which together yield (as $\int_{-\sigma_0}^{-\sigma_1} \frac{\dd{\sigma}}{|\sigma|^{1+\alpha}} = \int_{\sigma_1}^{\sigma_0} \frac{\dd{\sigma}}{|\sigma|^{1+\alpha}}$) 
\begin{align}
2\int_{\sigma_0}^{\sigma_1} \frac{\dd{\sigma}}{|\sigma|^{1+\alpha}} 
= &\int_{s_1+}^{s_+} \frac{\dd{s}}{G^{\alpha}|s|^{1+\alpha}} + \int_{s_-}^{s_{1-}} \frac{\dd{s}}{G^{\alpha}|s|^{1+\alpha}} \\ & + \frac1{\alpha}\left(\frac1{|Gs_{+}|^\alpha}  + \frac1{|Gs_{-}|^\alpha} - \frac2{|\sigma_0|^\alpha} \right) + O(\sigma_1^{1-\alpha}) .
\end{align}
 Hence, we rewrite $I_{\mynear,2}$ as follows,
\begin{align*}
    I_{\mynear,2} 
    &=  \int_{-\sigma_0}^{\sigma_0} \frac{\bar{a}(\sigma) - \bar{a}(0)}{|\sigma|^{1+\alpha}} \ \dd{\sigma} \\
    &=  \lim_{\sigma_1\to 0}\br{  \int_{\sigma_1}^{\sigma_0} \frac{\bar{a}(\sigma)}{|\sigma|^{1+\alpha}} \ \dd{\sigma} + \int_{-\sigma_0}^{-\sigma_1} \frac{\bar{a}(\sigma)}{|\sigma|^{1+\alpha}} \ \dd{\sigma}  - 2\int_{\sigma_1}^{\sigma_0} \frac{\bar{a}(0)}{|\sigma|^{1+\alpha}} \ \dd{\sigma} } \\
    &= \lim_{\sigma_1\to 0}
    \Bigg(  \int_{s_{1+}}^{s_+} \frac{a(s)}{|g(s)|^{1+\alpha}} - \frac{a(0)}{G^{1+\alpha}|s|^{1+\alpha}} \ \dd{s} \\
    &\quad +   \int_{s_{-}}^{s_{1-}} \frac{a(s)}{|g(s)|^{1+\alpha}} - \frac{a(0)}{G^{1+\alpha}|s|^{1+\alpha}} \ \dd{s}  + O(\sigma_{1}^{1-\alpha}) \Bigg) \\ 
    &\quad - \frac{a(0)}{\alpha G^{1+\alpha} s_+^\alpha} 
        - \frac{a(0)}{\alpha G^{1+\alpha} |s_-|^\alpha}  
        + 2\frac{a(0)}{\alpha G \sigma_0^\alpha}\\
    &= \int_{s_-}^{s_+} \frac{a(s)}{|g(s)|^{1+\alpha}} 
    - \frac{a(0)}{G^{1+\alpha}|s|^{1+\alpha}} \ \dd{s} \\
    &\quad - \frac{a(0)}{\alpha G^{1+\alpha} s_+^\alpha} 
        - \frac{a(0)}{\alpha G^{1+\alpha}|s_-|^\alpha}  
        + 2\frac{a(0)}{\alpha G \sigma_0^\alpha}. \refstepcounter{equation} + O(\sigma_1^{1-\alpha}).\tag{\theequation}\label{eqn-addnsub}
\end{align*}
The term  $2\frac{a(0)}{\alpha G \sigma_0^\alpha}$ here in \eqref{eqn-addnsub} exactly cancels with the term with $-2\frac{a(0)}{\alpha G \sigma_0^\alpha}$ in the equation \eqref{eqn-Inearfinal} for $I_{\mynear,1}$. We therefore can write $I_\mynear$ as follows,
\begin{align*}
    I_\mynear 
    &= \frac{a(0)}{G}C_\alpha \tau^{-\alpha} 
    + \int_{s_-}^{s_+} \frac{a(s)}{|g(s)|^{1+\alpha}} - \frac{a(0)}{G^{1+\alpha}|s|^{1+\alpha}} \ \dd{s} \\
    &\quad - \frac{a(0)}{\alpha G^{1+\alpha} s_+^\alpha} 
    - \frac{a(0)}{\alpha G^{1+\alpha}|s_-|^\alpha} + O(\tau^{2-\alpha}),\quad  \tau\to 0. \refstepcounter{equation}\tag{\theequation}\label{eqn-Inearfinal}
\end{align*}
To finish, we need to include $I_{\myfar}$. Recall from \eqref{eqn-Afar} that $A_{\myfar}=\mathbb T\setminus (s_-,s_+)$. Note that with $s_\pm$ fixed, $g(s)^{-(1+\alpha)/2}$ is $L^\infty_s(A_\myfar)$, and the following error estimate holds, since $\frac{a(s)}{|g(s)+\tau^2|^{(1+\alpha)/2}}$ is smooth in $\tau\ll 1$:
\[ \int_{A_\myfar} \frac{a(s)\dd{s}}{|g(s)+\tau^2|^{(1+\alpha)/2}} = \int_{A_\myfar} \frac{a(s)\dd{s}}{|g(s)|^{(1+\alpha)/2}} + O(\tau^2).\] Since 
$ \int_{A_\myfar} \frac{\dd{s}}{|s|^{1+\alpha}} = \br{\int_{-1/2}^{s_-} + \int_{s_+}^{1/2}  } \frac{\dd{s}}{|s|^{1+\alpha}} = \frac{-2^{1+\alpha}}{\alpha} + \frac{1}{\alpha s_+^\alpha } + \frac{1}{\alpha |s_-|^\alpha }$,
We have
\begin{align*} 
I_\myfar 
&= \int_{A_\myfar} \frac{a(s)}{|g(s)|^{(1+\alpha)/2}} - \frac{a(0)}{G^{1+\alpha}|s|^{1+\alpha}}\dd{s}  
+ \frac{a(0)}{G^{1+\alpha}}\int_{A_\myfar}  \frac{1}{|s|^{1+\alpha}}\dd{s} + O(\tau^2) \\
&= \int_{A_\myfar} \frac{a(s)}{|g(s)|^{(1+\alpha)/2}} - \frac{a(0)}{G^{1+\alpha}|s|^{1+\alpha}}\dd{s} \\
&\quad \frac{-2^{1+\alpha}a(0)}{\alpha G^{1+\alpha}} + \frac{a(0)}{\alpha G^{1+\alpha} s_+^\alpha} 
        + \frac{a(0)}{\alpha G^{1+\alpha}|s_-|^\alpha}  + O(\tau^2). \refstepcounter{equation}\tag{\theequation}\label{eqn-Ifar}
\end{align*}
The terms $\frac{a(0)}{\alpha G^{1+\alpha} s_+^\alpha} 
        + \frac{a(0)}{\alpha G^{1+\alpha}|s_-|^\alpha}$  in \eqref{eqn-Ifar} cancel  $-\frac{a(0)}{\alpha G^{1+\alpha} s_+^\alpha} 
        - \frac{a(0)}{\alpha G^{1+\alpha}|s_-|^\alpha}$  in \eqref{eqn-Inearfinal}, leaving an expression that does not depend on $s_\pm$. 
Therefore, 
\begin{align*}
    I 
    &= \frac{a(0)}{G} C_\alpha \tau^{-\alpha} 
    - \frac{2^{1+\alpha}a(0)}{\alpha G^{1+\alpha} } \\
    &+ \int_{-1/2}^{1/2} \frac{a(s)}{|g(s)|^{(1+\alpha)/2}} - \frac{a(0)}{G^{1+\alpha}|s|^{1+\alpha}} \ \dd{s}
    + O(\tau^{2-\alpha}),\quad  \tau\to 0.
\end{align*} 
Since $a,g$ are 1-periodic functions, we rewrite this with the constant $b_\alpha:= \int_{-1/2}^{1/2} \frac{1}{|s|^{1+\alpha}} - \frac{\pi^{1+\alpha}}{|\sin(\pi s)|^{1+\alpha}} \ \dd{s} \in\mathbb R$,
\begin{align*}
     I 
     &= \frac{a(0)}{G} C_\alpha \tau^{-\alpha} 
     - \frac{a(0)(\alpha^{-1} 2^{1+\alpha} + b_\alpha)}{ G^{1+\alpha} } \\
     &+ \int_{s\in\Torus} \frac{a(s)}{|g(s)|^{(1+\alpha)/2}} - \frac{\pi^{1+\alpha}a(0)}{G^{1+\alpha}|\sin(\pi s)|^{1+\alpha}} \ \dd{s}  
     + O(\tau^{2-\alpha}),\quad  \tau\to 0 ,
\end{align*}
as claimed.
\end{proof}

Concerning the integral in Lemma \ref{prop-asymptotic-integral}, we have the following result.
\begin{cor}\label{cor-to-asymptotic-lemma} Let $\Torus:=\Real / \mathbb Z$. For $\delta\in (-\delta_0,\delta_0)$ sufficiently small, let $H=H(\delta)$ denote the following family of integrals,
\[ H = \int_{s\in\Torus} \br{ \frac{a(s)}{|g(s,\delta)|^{(1+\alpha)/2}} - \frac{ a(0)}{G(\delta)^{1+\alpha}|\mysin s|^{1+\alpha}} } \ \dd{s},\]
where $a=a(s)\in C^\infty (\Torus)$, $,g=g(s,\delta) \in C^\infty(\Torus \times [0,\infty))$, and $g$ has a unique global minimum that is non-degenerate with $\del_s^2 g(0,\cdot) > c > 0$ for a constant $c$ independent of $\delta$, and
\begin{equation*}\operatorname{argmin} g(\cdot,\delta) = 0, \quad g(0,\delta) = \min g(\cdot,\delta) = 0
\end{equation*}
and $G(\delta) := \sqrt{\del_s^2 g(0,\delta)/2}$.
Then we have the first order Taylor expansion $H(\tau) = H(0) + H'(0)\delta + O(\delta^2)$  for $\tau\ll 1$, with
\[ H'(0) = \int_{s\in\Torus} \br{ \frac{a(s)\del_\delta g(s,0) (-1-\alpha)}{|g(s,0)|^{(3+\alpha)/2}} - \frac{ \del_{\delta}(G^{-1-\alpha})(0) a(0)}{|\mysin s|^{1+\alpha}} } \ \dd{s}.\]
\end{cor}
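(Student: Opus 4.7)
The plan is to establish the Taylor expansion $H(\delta) = H(0) + \delta H'(0) + O(\delta^2)$ by differentiating under the integral sign and justifying this operation via a second-order remainder estimate, with the technical core being a careful analysis of the near-singular behavior at $s=0$. First, I would observe that for $s \in \Torus \setminus \{0\}$ the integrand is smooth in $\delta$ for $\delta$ sufficiently small (since $g(s,\delta)>0$ there, and the non-degeneracy hypothesis $\del_s^2 g(0,\cdot)>c>0$ persists in a neighbourhood of $\delta=0$), so the pointwise derivative of the integrand yields exactly the expression claimed for $H'(0)$.

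The main content is to show that the formal expression for $H'(0)$ is an absolutely convergent integral, which requires verifying a cancellation at $s=0$. Using that $0$ is a non-degenerate minimum, I would Taylor expand $g(s,\delta) = G(\delta)^2 s^2 + s^3 \phi(s,\delta)$ for smooth $\phi$, whence $g(s,0)^{(3+\alpha)/2} = G(0)^{3+\alpha}|s|^{3+\alpha}(1 + O(s))$, and correspondingly $\del_\delta g(s,0) = 2 G(0) G'(0) s^2 + O(s^3)$. Substituting, the first summand of the $H'(0)$ integrand behaves like
\[ \frac{(-1-\alpha) a(0)\cdot 2 G(0) G'(0) }{G(0)^{3+\alpha}|s|^{1+\alpha}} + O(|s|^{-\alpha}) \qquad (s\to 0),\]
while $\del_\delta(G^{-1-\alpha})(0) = -(1+\alpha) G(0)^{-2-\alpha} G'(0)$ and $\mysin(s) = s + O(s^3)$ give a matching singular contribution from the second summand; combined with the prefactor adjustment (which I would verify carefully, as it dictates whether the cancellation is complete), the two $|s|^{-(1+\alpha)}$ singularities cancel and the integrand is locally bounded by $|s|^{-\alpha}$, which is integrable.

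For the remainder estimate, I would write
\[ H(\delta) - H(0) - \delta H'(0) = \int_0^\delta (\delta - \sigma) H''(\sigma) \, d\sigma ,\]
which reduces the problem to showing $H''(\sigma) = O(1)$ uniformly for $\sigma \in [0,\delta_0]$ with $\delta_0$ small. Computing $H''$ produces an integrand whose most singular term behaves like $\del_\delta g(s,\sigma)^2 / g(s,\sigma)^{(5+\alpha)/2}$ near $s=0$; using the Taylor expansion $g(s,\sigma) \asymp G(\sigma)^2 s^2$ and $\del_\delta g(s,\sigma) = O(s^2)$, this term is $\lesssim |s|^{-(1+\alpha)}$, which would fail to be integrable — so I must again subtract off the analogous second-derivative contribution from the $G(\delta)^{-1-\alpha}|\mysin s|^{-1-\alpha}$ counterterm and show their $|s|^{-(1+\alpha)}$ singular parts cancel. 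Equivalently, I would split $H = \int_{A_{\mynear}} + \int_{A_{\myfar}}$ as in the proof of Lemma \ref{prop-asymptotic-integral}, handle $A_{\myfar}$ trivially (since the integrand is smooth and uniformly bounded there), and on $A_{\mynear}$ change variables to $\sigma^2 = g(s,\delta)$, whose Jacobian depends smoothly on $\delta$, reducing the problem to estimating a difference of elementary integrals with $\delta$-dependent endpoints.

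\textbf{The main obstacle} is the cancellation analysis at $s=0$ at both first and second order in $\delta$: one must expand $g$, $G$, and the Jacobian of the change of variables to sufficiently high order in $s$ and $\delta$ jointly, and track which terms in the two counterbalancing pieces produce the leading $|s|^{-(1+\alpha)}$ singularities. Once this bookkeeping is done, integrability of $H'(0)$ and uniform boundedness of $H''(\sigma)$ follow, and the Taylor expansion $H(\delta)=H(0)+\delta H'(0)+O(\delta^2)$ is a direct consequence.
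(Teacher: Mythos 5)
Your strategy --- differentiate under the integral sign, verify that the $|s|^{-(1+\alpha)}$ singularities in the differentiated integrand cancel so that the formula for $H'(0)$ defines a convergent integral, then control $H''$ uniformly for the $O(\delta^2)$ remainder --- is the natural and correct one; the paper leaves this corollary unproved as an immediate consequence of Lemma~\ref{prop-asymptotic-integral}. But the cancellation you assert at the crucial step does not hold with the constants as you have written them, and your own numbers already show it. You compute the first summand's singular part as $\frac{(-1-\alpha)\,a(0)\cdot 2G(0)G'(0)}{G(0)^{3+\alpha}}\,|s|^{-1-\alpha}=\frac{-2(1+\alpha)a(0)G'(0)}{G(0)^{2+\alpha}}\,|s|^{-1-\alpha}$, while from $\del_\delta(G^{-1-\alpha})(0)=-(1+\alpha)G(0)^{-2-\alpha}G'(0)$ the second summand's singular part is $\frac{-(1+\alpha)a(0)G'(0)}{G(0)^{2+\alpha}}\,|s|^{-1-\alpha}$. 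These differ by a factor of $2$, so their difference has a non-vanishing $|s|^{-(1+\alpha)}$ coefficient $\frac{-(1+\alpha)a(0)G'(0)}{G(0)^{2+\alpha}}$, and the printed expression for $H'(0)$ is not an absolutely convergent integral. The ``prefactor adjustment'' you flagged as needing careful verification is exactly where this breaks.

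The resolution is that the coefficient $(-1-\alpha)$ in the corollary's stated formula for $H'(0)$ is itself off by a factor of $2$: the chain rule gives $\del_\delta\bigl(g^{-(1+\alpha)/2}\bigr)=\tfrac{-(1+\alpha)}{2}\,g^{-(3+\alpha)/2}\,g_\delta$, so the first term should carry $\tfrac{-(1+\alpha)}{2}$, not $(-1-\alpha)$. With that correction both summands have leading singular coefficient $\tfrac{-(1+\alpha)a(0)G'(0)}{G(0)^{2+\alpha}}$, they cancel exactly, and the residual $O(|s|^{-\alpha})$ integrability you claim is correct. (The same discrepancy propagates into the application inside the proof of Theorem~\ref{thm-approx-eqn}, where the displayed $(-2-2\alpha)$ should then read $(-1-\alpha)$.) Granted the corrected coefficient, the rest of your sketch is sound: expanding $g(s,\sigma)=G(\sigma)^2s^2+O(s^3)$ shows the second-order singular coefficient equals $\tfrac{(1+\alpha)a(0)}{G^{3+\alpha}}\bigl[(2+\alpha)(G')^2-GG''\bigr]$ in both pieces, so they cancel and $H''(\sigma)=O(1)$ uniformly; alternatively the $A_\mynear/A_\myfar$ split with the change of variables $\sigma^2=g(s,\delta)$ reduces matters to elementary integrals whose $\delta$-dependence is smooth.
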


\end{document}